\documentclass[12pt,twoside,a4paper]{article}
\usepackage{amsmath}
\usepackage{amssymb}
\usepackage{float}
\usepackage{graphicx}
\usepackage{epsfig}
\usepackage{cite}
\usepackage{epstopdf}
\usepackage{caption}
\usepackage{subfigure}
\usepackage{booktabs}
\usepackage{latexsym}
\usepackage{stmaryrd}
\usepackage{multicol}
\usepackage{array}
\usepackage{color}
\usepackage{amsthm}
\usepackage{amsfonts}
\usepackage{changes}
\usepackage{xcolor}
\usepackage{soul}

\graphicspath{{picture/}}
\usepackage[top=1in,bottom=1in,left=1.25in,right=1.25in]{geometry}
\usepackage[colorlinks,linkcolor=red,anchorcolor=green,citecolor=blue,CJKbookmarks=True]{hyperref}
\numberwithin{equation}{section} \makeatletter
\newtheoremstyle{theorem-mystyle}{2pt}{2pt}{\rm}{3mm}{\bf}{~~}{3mm}{}
\numberwithin{equation}{section}
\newtheorem{dl}{Theorem}[section]
\newtheorem{yl}{Lemma}[section]
\newtheorem{dy}{Definition}[section]
\newtheorem{tl}{Corollary}[section]

\date{}
\begin{document}
\author{{\normalsize
Licai Shao, Xingong Zhang\thanks{ Corresponding author,
	E-mail: zhangxingong@cqnu.edu.cn}, Yisheng Song} \\
{\small School of Mathematical Sciences, Chongqing Normal University,}\\
{\small  Chongqing, 401331, People's Republic of China}\\}
\title{\textbf{New Pareto Z-eigenvalue localization theorems for tensors with applications}
\thanks{ The project is supported by the National Natural
Science Foundation of China (No. 12171064).}}
\maketitle \noindent \small {\normalsize{\bf Abstract:}}
 In this paper, we establish some new localization sets for Pareto Z-eigenvalues of tensor eigenvalue complementarity problems and discuss the relationships among existing ones. As applications, a sufficient condition for the solution existence of the tensor complementarity problem (TCP) and the tensor generalized eigenvalue complementarity problem (TGEiCP) are presented. In addition, this condition is also used to test the vacuum stability of a general scalar potential.\\
{\bf Keywords:} Tensor eigenvalue complementarity problem;  Pareto Z-eigenvalue; Localization set; Copositive tensor.
\section{Introduction}
For any positive integer $n$, denote $[n]:=\{1,2,\ldots,n\}.$ Given two integers $m,n\geq2,$ an $m$th order $n$-dimensional real tensor is a multi-array of entries $\mathcal C=(c_{i_{1}i_{2}\cdots i_{m}})\in \mathbb{R},$ where $i_{j}\in[n]$ for all $j\in[m].$ The set of all $m$th order $n$-dimensional real tensors is denoted by $\mathbb{\mathbb{R}}^{[m,n]}.$ An $m$th order $n$-dimensional tensor $\mathcal C$ is said to be symmetric if its entries $c_{i_{1},i_{2},\cdots,i_{m}}$ are invariant for any permutation of the indices. For any $\mathcal C=(c_{i_{1}i_{2}\cdots i_{m}})\in\mathbb{\mathbb{R}}^{[m,n]}$ and $x=(x_{1},x_{2},\ldots,x_{n})^{\top}\in\mathbb{R}^{n},$ $\mathcal Cx^{m}$ is defined by
\begin{equation*}
\mathcal C x^{m} = \sum_{i_{1},i_{2},\cdots,i_{m}\in[n]} c_{i_{1} i_{2} \dots i_{m}} x_{i_{1}}x_{i_{2}} \cdots x_{i_{m}};
\end{equation*}
$\mathcal Cx^{m-1}$ denotes a vector whose $i$th component is given by
\begin{equation*}
(\mathcal{C}x^{m-1})_{i} = \sum_{i_{2},\cdots,i_{m}\in[n]} c_{i i_2 \dots i_m} x_{i_2} \cdots x_{i_m}.
\end{equation*}
For an $m$th order $n$-dimensional symmetric tensor $\mathcal C$, we consider a constrained optimization problem of the form:
\begin{equation}\label{1.1}
\begin{aligned}
\min \; & \frac{1}{m} \mathcal C x^{m} \\
\text{s.t.} \; & x^{\top} x = 1, \\
& x \in \mathbb{R}_{+}^{n},
\end{aligned}
\end{equation}
where, $\mathbb{R}_{+}^{n}=\{x\in\mathbb{R}^{n}: x\geq0\}.$\\
The Lagrange function for problem (\ref{1.1}) is then given by
\begin{equation*}
L(x, \lambda, y) = \frac{1}{m} \mathcal C x^{m} + \frac{1}{m} \lambda \left( 1 - x^{\top} x \right) - x^{\top} y,
\end{equation*}
where \( x, y \in \mathbb{R}_{+}^{n} \), \( \frac{\lambda}{m} \in \mathbb{R} \) is the Lagrange multiplier of the equality constraint and \( y \) is the Lagrange multiplier of non-negative constraint. Then the solution to problem (\ref{1.1}) satisfies the following conditions:
\begin{equation*}
\begin{aligned}
& y=\lambda x-\mathcal C x^{m-1}\geq0,\\
&\mathcal C x^{m}=\lambda x^{\top} x,\\
& x \in \mathbb{R}_{+}^{n}.
\end{aligned}
\end{equation*}
The Pareto Z-eigenpair of the tensor eigenvalue complementarity problem was introduced by Song et al.\cite{Song1}, which is a natural generalization of the matrix eigenvalue complementarity problem \cite{He1, S1, A1, F1, Qi1, L1, Qi2}.
\begin{dy}\label{dy:1.1}
Let $\mathcal C=(c_{i_{1}\cdots i_{m}})\in\mathbb{ R}^{[m,n]},$ if $x=(x_{1},x_{2}\cdots,x_{m})^{\top}\in\mathbb{ R}^{n}\setminus\{0\}$ is a nonzero real vector, $\lambda\in\mathbb{ R}$ is a real number, which satisfies the following tensor Z-eigenvalue complementarity problem:
\begin{equation*}
0\leq x\perp(\lambda x-\mathcal Cx^{m-1})\geq0\ \mbox{and} \ x^{\top}x=1,
\end{equation*}
then $\lambda$ is said to be a Pareto Z-eigenvalue of $\mathcal C$, $x$ is called the corresponding Pareto Z-eigenvector.
\end{dy}
Pareto Z-eigenvalues are also called complementarity Z-eigenvalues. It is worth noting that Zeng \cite{Zeng1} proposed  a tensor may not have the Z-eigenvalues, but has the Pareto Z-eigenvalues. Meanwhile, Zeng \cite{Zeng1} presented a semidefinite relaxation algorithm to compute Pareto Z-eigenvalues of tensor eigenvalue complementarity problems. However, computing all Pareto Z-eigenvalues of tensors is always a cumbersome task. Therefore, eigenvalue localization sets are introduced as an efficient tool to characterize the distribution of eigenvalues for higher-order tensors\cite{Wang1, Xu1, Xu2, Xu3, Xu4}. Based the above facts, some Pareto Z-eigenvalue localization sets for a piezoelectric tensor are given in\cite{Yang1, He2, Wang2}. For instance, Yang et al.\cite{Yang1} established an important localization theorem characterizing the distribution of Pareto Z-eigenvalues of tensors. 
\begin{dl}\label{dl:1.1}
Let $\mathcal C\in\mathbb{R}^{[m,n]}$ and $\sigma(\mathcal C)\neq\emptyset.$ Then,
\begin{equation*}
\sigma(\mathcal C)\subseteq\bigcup_{i\in[n]}\Omega_{i}(\mathcal C):=\{\lambda\in\mathbb{R}:|\lambda|\leq \max\{R_{i}(\mathcal C)_{+},R_{i}(\mathcal C)_{-}\}\}.
\end{equation*}
\end{dl}
Inspired by the results obtain in\cite{Yang1, He2, Xu1, Xu2, Xu3, Xu4, Sang1, Wang2, Du1}, we establish shaper localization sets than existing ones for Pareto Z-eigenvalues, compare the relationships among these sets, and present sufficient conditions for determining  copositivity (strict copositivity) of real symmetric tensors.

The remainder of this paper is organized as follows. In Section 2, new Pareto Z-eigenvalue localization theorems are established. Furthermore, the relationships between the newly obtained localization sets and some existing results are analyzed to show that our results are sharper. Section 3 presents verifiable sufficient conditions for tensor copositivity (strict copositivity) derived from the established localization sets. Furthermore, we establish conditions ensuring the solvability of both the tensor complementarity problem (TCP) and the tensor generalized eigenvalue complementarity problem (TGEiCP). These results additionally enable the verification of vacuum stability for general scalar potentials. In Section 4, the conclusions of this paper are summarized.

To conclude this section, some necessary notations as follows. The Kronecker symbol of m indices is defined by
\begin{equation*}
\delta_{i_{1} i_{2}\cdots i_{m}} =
\begin{cases} 
1, & \text{if } i_{1} = i_{2} = \cdots = i_{m},\\
0, & \text{otherwise.}
\end{cases}
\end{equation*}
For any $a\in\mathbb{R}$ and $\mathcal C=(c_{i_{1}\cdots i_{m}})\in\mathbb{ R}^{[m,n]}$, define
\begin{equation*}
[a]_{+}=:\max\{0,a\}, \quad [a]_{-}=:\max\{0,-a\}, \quad [\mathcal C]_{+}:=[c_{i_{1}\cdots i_{m}}]_{+}\in\mathbb{ R}^{[m,n]}, \quad [\mathcal C]_{-}:=[c_{i_{1}\cdots i_{m}}]_{-}\in\mathbb{ R}^{[m,n]}.
\end{equation*}
For any $i,j\in[n],$ let
\begin{equation*}
R_i(\mathcal{C})_{+}:=\sum_{i_2,\dots,i_m \in [n]}[c_{ii_2\dots i_m}]_{+}, \quad
R_i(\mathcal{C})_{-}:=\sum_{i_2,\dots,i_m \in [n]}[c_{ii_2\dots i_m}]_{-},
\end{equation*}
$$P_{i}^{j}(\mathcal C)_{+}:=\sum_{\substack{i_{2},\cdots,i_{m}\in [n] \\ j\notin\{i_{2},\cdots,i_{m}\}}}[c_{ii_{2}\cdots i_{m}}]_{+},\ P_{i}^{j}(\mathcal C)_{-}:=\sum_{\substack{i_{2},\cdots,i_{m}\in [n] \\ j\notin\{i_{2},\cdots,i_{m}\}}}[c_{ii_{2}\cdots i_{m}}]_{-}.$$
\section{New Pareto Z-eigenvalue localization sets}
To prove the main results, we reviewed some existing facts, which will be useful in the subsequent analysis.
\begin{yl}\label{yl:2.1}\cite{Song1}
A tensor $\mathcal C$ is  copositive (strictly copositive) if and only if all the Pareto Z-eigenvalues of $\mathcal C$ are nonegative (positive).
\end{yl}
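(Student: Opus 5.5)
The plan is to argue through the optimization problem (\ref{1.1}) and its KKT system, since Definition \ref{dy:1.1} is exactly those first-order conditions. Throughout, $\mathcal C$ is taken symmetric, as copositivity concerns only the form $\mathcal Cx^m$. Recall that $\mathcal C$ is copositive (strictly copositive) if $\mathcal Cx^m\ge 0$ ($>0$) for all $x\in\mathbb{R}^n_+\setminus\{0\}$. The basic link between the two notions is the following: if $(\lambda,x)$ is a Pareto Z-eigenpair, take the inner product of $x$ with $\lambda x-\mathcal Cx^{m-1}$. Complementarity gives $x^\top(\lambda x-\mathcal Cx^{m-1})=0$, so
\[
\lambda=\lambda x^\top x=\mathcal Cx^m .
\]
Hence every Pareto Z-eigenvalue is a value of the form $\mathcal Cx^m$ at a nonnegative unit vector.

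\emph{Necessity.} Suppose $\mathcal C$ is copositive (strictly copositive). Any Pareto Z-eigenvector $x$ is nonzero, nonnegative and of unit norm. The identity above then gives $\lambda=\mathcal Cx^m\ge0$ (respectively $>0$).

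\emph{Sufficiency.} Consider the minimum $\mu$ of $\frac1m\mathcal Cx^m$ over the compact set $\{x\ge0,\ x^\top x=1\}$, which is attained at some $x^*$. By homogeneity, $\mathcal C$ is copositive (strictly copositive) if and only if $\mathcal C(x^*)^m\ge0$ ($>0$). It therefore suffices to show that $\lambda^*:=\mathcal C(x^*)^m$ is a Pareto Z-eigenvalue with eigenvector $x^*$.

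This is the step I expect to be the main obstacle: the KKT conditions must hold at $x^*$. The constraints are linear inequalities together with the sphere. At a point with support $J$, the active gradients are the vector $x$ (supported on $J$) and the vectors $e_i$ for $i\notin J$. These are linearly independent, so LICQ holds, and Lagrange multipliers $\lambda$ and $y\ge0$ exist. Symmetry gives $\nabla(\frac1m\mathcal Cx^m)=\mathcal Cx^{m-1}$, so stationarity reads $\mathcal Cx^{*\,m-1}-\lambda x^*-y=0$. Together with $y\ge0$ and $y^\top x^*=0$, this yields
\[
0\le x^*\perp \lambda x^*-\mathcal Cx^{*\,m-1}\ge0 .
\]
Multiplying by $x^*$ gives $\lambda=\mathcal C(x^*)^m=m\mu$. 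So $m\mu$ is a Pareto Z-eigenvalue. By hypothesis it is nonnegative (positive), and hence $\mathcal Cx^m\ge m\mu\ge0$ ($>0$) for every nonnegative unit $x$. The same argument shows in addition that the smallest Pareto Z-eigenvalue equals $\min\{\mathcal Cx^m: x\ge0,\ x^\top x=1\}$, which is the form in which the lemma will be applied to the localization sets later.
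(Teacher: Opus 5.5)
\textbf{There is a sign error in the KKT step, and with Definition~\ref{dy:1.1} as printed the ``if'' direction is false.}

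Your necessity half is fine, since it uses only $\lambda=\lambda x^\top x=\mathcal Cx^m$, which holds for any tensor. Your sufficiency route, KKT at a minimizer of $\mathcal Cx^m$ over $\{x\ge0,\ x^\top x=1\}$, is the standard argument of \cite{Song1}. The paper gives no proof of its own; it only cites that source.

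The error sits in the step you flagged as the main obstacle, and it is exactly where the argument breaks. Your stationarity equation $\mathcal C x^{*\,m-1}-\lambda x^{*}-y=0$ with $y\ge0$, $y^\top x^{*}=0$ is correct. But it gives $y=\mathcal Cx^{*\,m-1}-\lambda x^{*}$, that is,
\[
0\le x^{*}\perp \mathcal Cx^{*\,m-1}-\lambda x^{*}\ge0,
\]
which is the \emph{opposite} inequality to Definition~\ref{dy:1.1}. No choice of multiplier sign fixes this. A minimizer produces the Song--Qi system. The system in Definition~\ref{dy:1.1} is the KKT system of the \emph{maximization} problem; the paper's Lagrangian computation in the introduction has the same slip.

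Here is a counterexample to the ``if'' direction under Definition~\ref{dy:1.1}. Take $m=n=2$ and the symmetric matrix $C$ with $c_{11}=-1$, $c_{12}=c_{21}=5$, $c_{22}=1$.
\begin{itemize}
\item Support $\{1\}$ forces $\lambda=-1$, and then $\lambda x-Cx=(0,-5)^\top$, which is not nonnegative.
\item Support $\{2\}$ forces $\lambda=1$, and then $\lambda x-Cx=(-5,0)^\top$, which is not nonnegative.
\item Full support forces $Cx=\lambda x$ with $x>0$, which happens only for $\lambda=\sqrt{26}$.
\end{itemize}
So every Pareto Z-eigenvalue of $C$ is positive, yet $e_1^\top Ce_1=-1<0$. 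Your closing remark fails for the same reason: under Definition~\ref{dy:1.1} it is the \emph{largest} Pareto Z-eigenvalue that equals $\max\{\mathcal Cx^m: x\ge0,\ x^\top x=1\}$, not the smallest that equals the minimum.

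Once the inequality is read off correctly, your argument proves the lemma of \cite{Song1} verbatim under the convention $0\le x\perp \mathcal Cx^{m-1}-\lambda x\ge0$. Equivalently, in the paper's convention: $\mathcal C$ is copositive (strictly copositive) if and only if every Pareto Z-eigenvalue of $-\mathcal C$ is nonpositive (negative).

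Separately, symmetry is a genuine hypothesis, not a harmless normalization, because the Pareto spectrum is not invariant under symmetrization. For the nonsymmetric $C$ with $c_{11}=c_{22}=1$, $c_{12}=-3$, $c_{21}=0$, the only Pareto Z-eigenvalue in either convention is $1$. Yet $x^\top Cx=-\tfrac12$ at $x=(1,1)^\top/\sqrt2$.
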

\begin{yl}\label{yl:2.2}\cite{Xu4}
Let $u$ and $v$ be two nonnegative real numbers, then $|u-v|\leq max\{u,v\}$.
\end{yl}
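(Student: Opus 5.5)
This is the elementary inequality $|u-v|\le\max\{u,v\}$ for $u,v\ge 0$. I will prove it by a direct case split on which of $u,v$ is larger. The order of $u$ and $v$ determines both the sign of $u-v$ and the value of the maximum.

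First, suppose $u\ge v$. Then $|u-v|=u-v$. Since $v\ge 0$, we get $u-v\le u=\max\{u,v\}$.

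Second, suppose $u<v$. Then $|u-v|=v-u$. Since $u\ge 0$, we get $v-u\le v=\max\{u,v\}$.

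The two cases exhaust all possibilities, so the inequality holds. There is no real obstacle here. The only point to note is that nonnegativity of the \emph{smaller} of the two numbers is exactly what is used in each case, so the hypothesis $u,v\ge 0$ cannot be dropped.

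An equivalent one-line formulation is $|u-v|=\max\{u,v\}-\min\{u,v\}\le\max\{u,v\}$, because $\min\{u,v\}\ge 0$.

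Later in the paper, this lemma will be applied with $u$ and $v$ playing the roles of $\sum[c_{i i_2\cdots i_m}]_{+}x_{i_2}\cdots x_{i_m}$ and $\sum[c_{i i_2\cdots i_m}]_{-}x_{i_2}\cdots x_{i_m}$ for a nonnegative vector $x$. This is why the localization sets involve $\max\{R_i(\mathcal C)_{+},R_i(\mathcal C)_{-}\}$ rather than the full absolute row sum.
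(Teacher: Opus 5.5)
Your proof is correct and complete. The case split on the order of $u$ and $v$ (equivalently, $|u-v|=\max\{u,v\}-\min\{u,v\}\le\max\{u,v\}$ since $\min\{u,v\}\ge 0$) is the standard argument. The paper gives no proof of this lemma and simply quotes it from \cite{Xu4}, so there is no in-paper route to compare against.
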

\begin{yl}\label{yl:2.3}\cite{Sang1}
Let $x_{1}^{2}+x_{2}^{2}+\cdots+x_{n}^{2}=1,$ where $x_{i}\in\mathbb{R}, i\in[n].$ If $y_{1},\ldots,y_{k}$ are arbitrary $k$ entries of $x_{1},\ldots,x_{n},$ then
\begin{equation*}
 |y_{1}||y_{2}|\cdots|y_{k}|\leq\frac{1}{k^{\frac{k}{2}}}.
\end{equation*}
\end{yl}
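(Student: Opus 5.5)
We prove Lemma \ref{yl:2.3} by reducing it to the arithmetic--geometric mean inequality applied to the squares of the selected entries.

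Let $y_{1},\ldots,y_{k}$ be $k$ distinct entries of $x_{1},\ldots,x_{n}$, so that $k\leq n$. Since these entries are a subcollection of the $x_{i}$ and every square is nonnegative, we have
\begin{equation*}
y_{1}^{2}+y_{2}^{2}+\cdots+y_{k}^{2}\leq x_{1}^{2}+x_{2}^{2}+\cdots+x_{n}^{2}=1.
\end{equation*}

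Next we apply the AM--GM inequality to the $k$ nonnegative numbers $y_{1}^{2},\ldots,y_{k}^{2}$. This gives
\begin{equation*}
\left(y_{1}^{2}y_{2}^{2}\cdots y_{k}^{2}\right)^{\frac{1}{k}}\leq\frac{y_{1}^{2}+\cdots+y_{k}^{2}}{k}\leq\frac{1}{k}.
\end{equation*}
Raising both sides to the power $k/2$ is legitimate because both sides are nonnegative and $t\mapsto t^{k/2}$ is increasing on $[0,\infty)$. The left-hand side becomes $\left(y_{1}^{2}\cdots y_{k}^{2}\right)^{\frac{1}{2}}=|y_{1}||y_{2}|\cdots|y_{k}|$, and the right-hand side becomes $k^{-\frac{k}{2}}$, which is the stated bound.

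The only point that needs care is the meaning of ``$k$ entries''. The argument requires the $y_{j}$ to be distinct coordinates of $x$, that is, $y_{j}=x_{s_{j}}$ with pairwise different indices $s_{j}$. Only then is the sum of their squares a partial sum of $x^{\top}x$ and hence at most $1$. If indices were allowed to repeat, the bound can fail: with $n=1$, $x_{1}=1$ and $k=2$ we get $|x_{1}|^{2}=1>1/2$. We therefore read the statement with distinct indices, and under that reading the proof above is complete. No other step presents any difficulty.
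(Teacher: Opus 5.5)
Your proof is correct and complete. Selecting pairwise distinct coordinates gives $y_1^2+\cdots+y_k^2\le 1$. Applying AM--GM to $y_1^2,\ldots,y_k^2$ and then the increasing map $t\mapsto t^{k/2}$ gives exactly $|y_1|\cdots|y_k|\le k^{-k/2}$.

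The paper does not prove this lemma; it imports it from \cite{Sang1}. Your AM--GM argument is the natural proof.

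Your caveat about distinct indices is right and worth keeping. The counterexample $n=1$, $x_1=1$, $k=2$ is valid. The caveat also matters later in the paper. In the proof of Theorem~\ref{dl:3.1}, the factor $(m-1)^{-\frac{m-1}{2}}$ (Lemma~\ref{yl:2.3} with $k=m-1$) is attached to monomials $x_{i_2}\cdots x_{i_m}$ whose indices can repeat. That use falls outside the only reading under which the lemma holds.
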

\begin{dl}\label{dl:2.1}
Let $\mathcal C=(c_{i_{1}\cdots i_{m}})\in\mathbb{ R}^{[m,n]}$. Then
$$\sigma(\mathcal C)\subseteq \mathcal O(\mathcal C)=\bigcup_{i\in[n]}\bigcap_{j\in[n] \ i\neq j}\mathcal O_{i,j}(\mathcal C),$$
where $\mathcal O_{i,j}(\mathcal C)=\{\lambda\in\mathbb{R}: \left(|\lambda|-\max\{R_{i}(\mathcal C)_{+}-P^{j}_{i}(\mathcal C)_{+}, R_{i}(\mathcal C)_{-}-P^{j}_{i}(\mathcal C)_{-}\}\right)|\lambda|\leq\max\{P^{j}_{i}(\mathcal C)_{+},P^{j}_{i}(\mathcal C)_{-}\}\times\max\{R_{j}(\mathcal C)_{+},R_{j}(\mathcal C)_{-}\}$.
\end{dl}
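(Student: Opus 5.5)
Let $\lambda\in\sigma(\mathcal C)$ with Pareto Z-eigenvector $x\geq 0$, $x^\top x=1$. By complementarity, for every index $k$ with $x_k>0$ we have $\lambda x_k=(\mathcal Cx^{m-1})_k$. We choose $i$ with $x_i=\max_k x_k>0$, and show $\lambda\in\mathcal O_{i,j}(\mathcal C)$ for every $j\neq i$. Since $x_i$ is the largest component and $x\ne0$, we have $0< x_i\le 1$, and $0\le x_k\le x_i$ for all $k$, so every product $x_{i_2}\cdots x_{i_m}\le x_i^{m-1}\le x_i$. For the $i$th equation we split $(\mathcal Cx^{m-1})_i$ into the terms with $j\notin\{i_2,\dots,i_m\}$ and those with $j\in\{i_2,\ldots,i_m\}$. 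In each group we separate positive and negative parts of the coefficients: $\sum c\,x\cdots x=\sum[c]_+x\cdots x-\sum[c]_-x\cdots x$, a difference of two nonnegative numbers. Lemma~\ref{yl:2.2} then bounds its modulus by the maximum of the two.

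For the group without $j$, the bound is $\max\{P^j_i(\mathcal C)_+,P^j_i(\mathcal C)_-\}\,x_i^{m-1}$. For the group containing $j$, every monomial contains the factor $x_j$ and the remaining $m-2$ factors are $\le x_i$. This gives the bound $\max\{R_i(\mathcal C)_+-P^j_i(\mathcal C)_+,\,R_i(\mathcal C)_--P^j_i(\mathcal C)_-\}\,x_i^{m-2}x_j$.

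So I must place the factor $x_j$ on the right group. The inequality in $\mathcal O_{i,j}$ has $(|\lambda|-\max\{R_i-P_i^j\})|\lambda|\le \max\{P_i^j\}\max\{R_j\}$, so the part containing $j$ is the one treated as "diagonal-like" and compared with $|\lambda|$, while the part without $j$ gets multiplied by the $j$th row bound. I therefore bound the group with $j$ crudely by $x_i$. The estimate is then
\[
|\lambda| x_i\le \max\{R_i(\mathcal C)_+-P^j_i(\mathcal C)_+,R_i(\mathcal C)_--P^j_i(\mathcal C)_-\}\,x_i+\max\{P^j_i(\mathcal C)_+,P^j_i(\mathcal C)_-\}\,|T_j|,
\]
where $T_j:=\sum_{j\notin\{i_2..i_m\}}c_{ii_2\cdots i_m}x_{i_2}\cdots x_{i_m}$ is handled via the monomials. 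The cleanest route is the standard Brauer-type trick: bound the $j$-free monomials by $x_j$-free products and then use the $j$th equation. This has a case split.

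\emph{Case $x_j>0$.} Here $\lambda x_j=(\mathcal Cx^{m-1})_j$, so by Lemma~\ref{yl:2.2}, $|\lambda|x_j\le \max\{R_j(\mathcal C)_+,R_j(\mathcal C)_-\}x_i^{m-1}$. The $j$-free group is bounded by $\max\{P^j_i\}x_i^{m-1}$, while the group containing $j$ is bounded by $\max\{R_i-P_i^j\}x_i^{m-2}x_j$. Multiplying the resulting inequality $(|\lambda|-\max\{P^j_i\})x_i^{m-1}\le\max\{R_i-P_i^j\}x_i^{m-2}x_j$ by $|\lambda|$ and substituting the $j$th bound yields a product form. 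I expect the main obstacle to be matching which factor ends up paired with $\max\{R_j\}$ to the exact statement. If my roles come out swapped (i.e.\ $\max\{P^j_i\}$ in the subtracted term), I will rearrange. Specifically, I will take the $j$-containing group as the one subtracted, bound it by $\max\{R_i-P^j_i\}x_i^{m-1}$, and bound the $j$-free group by $\max\{P_i^j\}$ times a quantity controlled by $|\lambda|^{-1}\max\{R_j\}$. If needed, I will choose $j$-dependent normalizations using $x_i\le1$ and Lemma~\ref{yl:2.3} so that the powers of $x_i$ cancel after dividing by $x_i^{m-1}>0$.

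\emph{Case $x_j=0$.} The group containing $j$ vanishes, so $|\lambda|x_i^{m-1}\cdot$(powers) $\le\max\{P^j_i\}x_i^{m-1}$ directly. Combined with $|\lambda|\cdot\max\{P^j_i\}\le \max\{P^j_i\}\max\{R_j\}$, this needs care. The alternative is to note that if $|\lambda|\le\max\{R_i-P^j_i\}$ the left side of the defining inequality is $\le 0$ and membership is trivial. Otherwise I use the main chain. Finally, since $j\neq i$ is arbitrary, $\lambda\in\bigcap_{j\ne i}\mathcal O_{i,j}(\mathcal C)\subseteq\mathcal O(\mathcal C)$.
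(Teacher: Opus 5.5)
The step that fails is the one you postpone: making the roles of the two groups match the statement. Write $M^j_i:=\max\{P^j_i(\mathcal C)_+,P^j_i(\mathcal C)_-\}$, $N^j_i:=\max\{R_i(\mathcal C)_+-P^j_i(\mathcal C)_+,R_i(\mathcal C)_--P^j_i(\mathcal C)_-\}$ and $r_j:=\max\{R_j(\mathcal C)_+,R_j(\mathcal C)_-\}$. Then $\mathcal O_{i,j}(\mathcal C)$ is the set where $(|\lambda|-N^j_i)|\lambda|\le M^j_i r_j$. Your bounds on the two groups are the right ones, and they give $(|\lambda|-M^j_i)x_i^{m-1}\le N^j_i x_i^{m-2}x_j$. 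So it is the $j$-free group, not the $j$-containing one, that gets subtracted from $|\lambda|$. The promised rearrangement cannot exist. The $j$-free group always contains the diagonal monomial $c_{ii\cdots i}x_i^{m-1}$, which admits no bound in terms of $x_j$ or of $|\lambda|^{-1}r_j$, and no normalization via $x_i\le 1$ or Lemma~\ref{yl:2.3} changes that. Your Case $x_j=0$ is exactly where this breaks, and in fact the statement as written is false. Take $n=2$, any $m\ge 2$, $c_{11\cdots 1}=1$ and all other entries zero. Then $x=e_1$, $\lambda=1$ is a Pareto Z-eigenpair. But $M^2_1=1$, $N^2_1=r_2=0$ and $M^1_2=N^1_2=0$, so both $\mathcal O_{1,2}(\mathcal C)$ and $\mathcal O_{2,1}(\mathcal C)$ reduce to $\lambda^2\le 0$. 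Hence $\mathcal O(\mathcal C)=\{0\}$ and $1\notin\mathcal O(\mathcal C)$.

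What your Case $x_j>0$ chain does prove, for every $j\ne i$, is the interchanged inclusion $(|\lambda|-M^j_i)|\lambda|\le N^j_i r_j$. If $|\lambda|\le M^j_i$ there is nothing to show. Otherwise the inequality above forces $x_j>0$, so the $j$th equation gives $|\lambda|x_j\le r_j x_i^{m-1}$. Multiplying and dividing by $x_i^{m-1}x_j>0$ yields $(|\lambda|-M^j_i)|\lambda|\le N^j_i r_j x_i^{m-2}\le N^j_i r_j$, so the case $x_j=0$ needs no care at all.

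The paper's own proof gets the stated pairing only by bounding the $q$-containing group by a multiple of $x_p^2$ and the $q$-free group by $\max\{P^q_p(\mathcal C)_+,P^q_p(\mathcal C)_-\}x_px_q$. The latter fails for the term $c_{pp\cdots p}x_p^m$, so that proof has the same hole. It also concludes only $\lambda\in\mathcal O_{p,q}(\mathcal C)$ for the second-largest index $q$, which does not give membership in $\bigcap_{j\ne p}\mathcal O_{p,j}(\mathcal C)$; your treatment of all $j\ne i$ handles this correctly. The right outcome is to state and prove the interchanged set and record the counterexample, rather than promise a rearrangement.
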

\begin{proof}
Suppose that $\lambda$ is a Pareto Z-eigenvalue of $\mathcal C$ with corresponding eigenvector $x$. Let $|x_{p}|\geq|x_{q}|\geq \max \{x_{i}:i\in[n], i\neq p, i\neq q\}$. By Definition \ref{dy:1.1}, we have
\begin{equation}\label{2.1}
\lambda x_{p}^{2}=\sum_{\substack{i_{2},\cdots,i_{m}\in [n] \\ p\in\{i_{2},\cdots,i_{m}\}}}c_{pi_{2}\cdots i_{m}}x_{p}x_{i_{2}}\cdots x_{i_{m}}+\sum_{\substack{i_{2},\cdots,i_{m}\in [n] \\ p\notin\{i_{2},\cdots,i_{m}\}}}c_{pi_{2}\cdots i_{m}}x_{p}x_{i_{2}}\cdots x_{i_{m}}.
\end{equation}
Taking the absolute value of both sides of Equation (\ref{2.1}), it holds that
\begin{equation*}
\begin{aligned}
|\lambda|x_{p}^{2}&=\Bigg|\sum_{\substack{i_{2},\cdots,i_{m}\in [n] \\ q\in\{i_{2},\cdots,i_{m}\}}}c_{pi_{2}\cdots i_{m}}x_{p}x_{i_{2}}\cdots x_{i_{m}}+\sum_{\substack{i_{2},\cdots,i_{m}\in [n] \\ q\notin\{i_{2},\cdots,i_{m}\}}}c_{pi_{2}\cdots i_{m}}x_{p}x_{i_{2}}\cdots x_{i_{m}}\Bigg|\\
&\leq\Bigg|\sum_{\substack{i_{2},\cdots,i_{m}\in [n] \\ q\in \{i_{2},\cdots,i_{m}\}}}[c_{pi_{2}\cdots i_{m}}]_{+}x_{p}x_{i_{2}}\cdots x_{i_{m}}-\sum_{\substack{i_{2},\cdots,i_{m}\in [n] \\ q\in \{i_{2},\cdots,i_{m}\}}}[c_{pi_{2}\cdots i_{m}}]_{-}x_{p}x_{i_{2}}\cdots x_{i_{m}}\Bigg|\\
&+\Bigg|\sum_{\substack{i_{2},\cdots,i_{m}\in [n] \\ q\notin \{i_{2},\cdots,i_{m}\}}}[c_{pi_{2}\cdots i_{m}}]_{+}x_{p}x_{i_{2}}\cdots x_{i_{m}}-\sum_{\substack{i_{2},\cdots,i_{m}\in [n] \\ q\notin \{i_{2},\cdots,i_{m}\}}}[c_{pi_{2}\cdots i_{m}}]_{-}x_{p}x_{i_{2}}\cdots x_{i_{m}}\Bigg|\\
&\leq\max\left\{\sum_{\substack{i_{2},\cdots,i_{m}\in [n] \\ q\in \{i_{2},\cdots,i_{m}\}}}[c_{pi_{2}\cdots i_{m}}]_{+},\sum_{\substack{i_{2}\cdots i_{m}\in [n] \\ q\in \{i_{2},\cdots,i_{m}\}}}[c_{pi_{2}\cdots i_{m}}]_{-}\right\}x_{p}^{2}\\
&+\max\left\{\sum_{\substack{i_{2},\cdots,i_{m}\in [n] \\ q\notin \{i_{2},\cdots,i_{m}\}}}[c_{pi_{2}\cdots i_{m}}]_{+},\sum_{\substack{i_{2},\cdots,i_{m}\in [n] \\ q\notin \{i_{2},\cdots,i_{m}\}}}[c_{pi_{2}\cdots i_{m}}]_{-}\right\}x_{p}x_{q},
\end{aligned}
\end{equation*}
that is
\begin{equation}\label{2.2}
\left(|\lambda|-\max\{R_{p}(\mathcal C)_{+}-P_{p}^{q}(\mathcal C)_{+},R_{p}(\mathcal C)_{-}-P_{p}^{q}(\mathcal C)_{-}\}\right)x_{p}^{2}\leq\max\{P_{p}^{q}(\mathcal C)_{+},P_{p}^{q}(\mathcal C)_{-}\}x_{p}x_{q}.
\end{equation}
From Definition \ref{dy:1.1}, we have
\begin{eqnarray*}
|\lambda| x_{q}^{2} &=& \Bigg|\sum_{\substack{i_{2},\cdots,i_{m}\in [n]}}[c_{qi_{2}\cdots i_{m}}]_{+}x_{q}x_{i_{2}}\cdots x_{i_{m}}-\sum_{\substack{i_{2},\cdots,i_{m}\in [n]}}[c_{qi_{2}\cdots i_{m}}]_{-}x_{q}x_{i_{2}}\cdots x_{i_{m}}\Bigg| \\
  &\leq& \max\left\{ 
    \sum_{\substack{i_2,\dots,i_{m} \in [n]}} 
    [c_{q i_{2} \cdots i_{m}}]_{+},\ 
    \sum_{\substack{i_2,\dots,i_{m} \in [n]}} 
    [c_{q i_{2} \cdots i_{m}}]_{-}
\right\} x_{p} x_{q},
\end{eqnarray*}
that is 
\begin{equation}\label{2.3}
|\lambda|x_{q}^{2}\leq\max\{R_{q}(\mathcal C)_{+},R_{q}(\mathcal C)_{-}\}x_{p}x_{q}.
\end{equation}

{\bf Case I.} If $x_{q}=0.$ It follows from \eqref{2.3} that
\begin{equation*}
(|\lambda|-\max\{R_{p}(\mathcal C)_{+}-P_{p}^{q}(\mathcal C)_{+},R_{p}(\mathcal C)_{-}-P_{p}^{q}(\mathcal C)_{-}\})|\lambda|\leq0\leq\max\{P_{p}^{q}(\mathcal C)_{+},P_{p}^{q}(\mathcal C)_{-}\}\times\max\{R_{q}(\mathcal C)_{+},R_{q}(\mathcal C)_{-}\},
\end{equation*}
this shows $\lambda\in\mathcal O_{p,q}(\mathcal C)\subseteq\mathcal O(\mathcal C).$

{\bf Case II.} If $x_{q}>0.$ 
Multiplying \eqref{2.2} and \eqref{2.3}, we have
$$\left(|\lambda|-\max\{R_{p}(\mathcal C)_{+}-P_{p}^{p}(\mathcal C)_{+},R_{p}(\mathcal C)_{-}-P_{p}^{p}(\mathcal C)_{-}\}\right)|\lambda|\leq\max\{P_{p}^{p}(\mathcal C)_{+},P_{p}^{p}(\mathcal C)_{-}\}\times\max\{R_{q}(\mathcal C)_{+},R_{q}(\mathcal C)_{-}\},$$
this implies $\lambda\in\mathcal O_{p,q}(\mathcal C)\subseteq\mathcal O(\mathcal C).$
\end{proof}
\begin{dl}\label{dl:2.2}
Let $\mathcal C=(c_{i_{1}\cdots i_{m}})\in\mathbb{ R}^{[m,n]},$ $m, n\geq2$. Then
$$\sigma(\mathcal C)\subseteq\mathcal O(\mathcal C)\subseteq\Omega(\mathcal C).$$
\end{dl}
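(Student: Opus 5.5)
The first inclusion $\sigma(\mathcal C)\subseteq\mathcal O(\mathcal C)$ is exactly Theorem \ref{dl:2.1}, so only $\mathcal O(\mathcal C)\subseteq\Omega(\mathcal C)$ needs proof. Take $\lambda\in\mathcal O(\mathcal C)$. Then there is an index $i\in[n]$ with $\lambda\in\mathcal O_{i,j}(\mathcal C)$ for every $j\neq i$. Since $n\geq2$, we may fix one such $j$ and work with the single condition $\lambda\in\mathcal O_{i,j}(\mathcal C)$. It then suffices to show $\lambda\in\Omega_i(\mathcal C)\cup\Omega_j(\mathcal C)$, that is,
$$|\lambda|\leq\max\{R_i(\mathcal C)_+,R_i(\mathcal C)_-\}\quad\text{or}\quad|\lambda|\leq\max\{R_j(\mathcal C)_+,R_j(\mathcal C)_-\}.$$

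Write
$$a=\max\{R_i(\mathcal C)_+-P_i^j(\mathcal C)_+,\;R_i(\mathcal C)_--P_i^j(\mathcal C)_-\},\qquad b=\max\{P_i^j(\mathcal C)_+,P_i^j(\mathcal C)_-\},$$
$$r_j=\max\{R_j(\mathcal C)_+,R_j(\mathcal C)_-\},\qquad r_i=\max\{R_i(\mathcal C)_+,R_i(\mathcal C)_-\}.$$
Because $R_i(\mathcal C)_\pm=(R_i(\mathcal C)_\pm-P_i^j(\mathcal C)_\pm)+P_i^j(\mathcal C)_\pm$ and all these quantities are nonnegative, the elementary inequality $\max\{u_1+v_1,u_2+v_2\}\leq\max\{u_1,u_2\}+\max\{v_1,v_2\}$ gives $r_i\leq a+b$. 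The useful direction of the final comparison, however, comes from the structure of the defining inequality, as follows.

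Argue by contradiction: suppose $|\lambda|>r_i$ and $|\lambda|>r_j$. Then $|\lambda|-a>r_i-a\geq0$. I will need the reverse-type fact $b\leq r_i-a$. This is the delicate point, since $a+b\geq r_i$ holds in general, not the inequality I want.

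The approach I will try first avoids that fact by handling the product directly. Set $t=|\lambda|$, so that $(t-a)t\leq b\,r_j$. We have $t-a>0$, because $a\leq r_i<t$. If $b\leq t-a$, then $(t-a)t\leq b r_j< (t-a)\,t$ whenever $b=0$ or $r_j<t$, which is a contradiction.

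The remaining case $b>t-a$ means $t<a+b$. This case is the main obstacle. Since $a+b$ can exceed $r_i$ when the maxima are attained by opposite signs, I expect the proof to require splitting into cases according to which sign attains each maximum. When the same sign attains both $a$ and $b$, we get $a+b=r_i$, so $t<r_i$, a contradiction. When the signs differ, I will compare $(t-a)t$ with $b\,r_j$ more carefully. If this comparison fails, I would fall back on reproving the inclusion directly from the eigen-equation. Lemma \ref{yl:2.2} gives
$$|\lambda|x_p^2\leq\max\{R_p(\mathcal C)_+,R_p(\mathcal C)_-\}x_p^2,$$
so $\lambda\in\Omega_p(\mathcal C)$ holds for the same index $p$ used in Theorem \ref{dl:2.1}.
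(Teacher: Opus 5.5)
\textbf{There is a genuine gap: the opposite-sign case, which you leave open, cannot be closed, because $\mathcal O(\mathcal C)\subseteq\Omega(\mathcal C)$ is false.} You leave unresolved the case $t<a+b$ with $a$ and $b$ attained by opposite signs ("I will compare $(t-a)t$ with $b\,r_j$ more carefully"). No such comparison exists. Take $m=n=2$ and $\mathcal C=\begin{pmatrix}1&-1\\0&1\end{pmatrix}$. Then:
\begin{itemize}
\item $R_1(\mathcal C)_+=R_1(\mathcal C)_-=1$, $P_1^2(\mathcal C)_+=1$ and $P_1^2(\mathcal C)_-=0$, so $a=1$ (attained by the $-$ sign) and $b=1$ (attained by the $+$ sign);
\item $r_1=r_2=1$;
\item hence $\mathcal O(\mathcal C)\supseteq\mathcal O_{1,2}(\mathcal C)=\{\lambda:|\lambda|^2-|\lambda|\leq 1\}=\{\lambda:|\lambda|\leq\frac{1+\sqrt5}{2}\}$, while $\Omega(\mathcal C)=\{\lambda:|\lambda|\leq1\}$.
\end{itemize}
The same numbers arise for $m=3$ with $c_{111}=c_{222}=1$, $c_{122}=-1$ and all other entries zero.

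Your fallback does not help either. The estimate from the eigen-equation shows that Pareto Z-eigenvalues lie in $\Omega(\mathcal C)$, which is just Theorem \ref{dl:1.1}. But $\mathcal O(\mathcal C)\subseteq\Omega(\mathcal C)$ is an inclusion between two sets defined by inequalities in $|\lambda|$. Most points of $\mathcal O(\mathcal C)$ are not eigenvalues, so there is no eigenvector to work with.

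The paper's proof breaks at exactly the point you flagged. In Case II it obtains $\frac{|\lambda|-a}{b}\leq 1$ or $\frac{|\lambda|}{r_q}\leq1$, and then concludes $\lambda\in\Omega_p(\mathcal C)\cup\Omega_q(\mathcal C)$. The first alternative only gives $|\lambda|\leq a+b$, so this step silently uses $a+b\leq r_p$, the reverse of the inequality $a+b\geq r_p$ that you correctly noted. Your subcase $b\leq t-a$ and your same-sign subcase (where $a+b=r_i$) are both correct. Combined with the elementary bound $a+b\leq R_i(\mathcal C)_++R_i(\mathcal C)_-$, they prove what is actually true:
\begin{itemize}
\item $\mathcal O_{i,j}(\mathcal C)\subseteq\Omega_i(\mathcal C)\cup\Omega_j(\mathcal C)$ whenever a common sign attains both maxima $a$ and $b$;
\item in general, $\mathcal O(\mathcal C)\subseteq\{\lambda:|\lambda|\leq\max_i(R_i(\mathcal C)_++R_i(\mathcal C)_-)\}$.
\end{itemize}

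Citing Theorem \ref{dl:2.1} for the first inclusion is also unsafe. Its proof bounds the terms with $q\notin\{i_2,\dots,i_m\}$ by $x_px_q$, but these terms include $c_{p\cdots p}x_p^m$, so the bound fails. Concretely, $\mathcal C=\mathrm{diag}(1,0)$ has the Pareto Z-eigenvalue $1$ with eigenvector $e_1$, while $\mathcal O(\mathcal C)=\{0\}$.
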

\begin{proof}
For any $\lambda \in \mathcal O_{p,q}(\mathcal C)$, we divide the proof into two parts. There exist $p,q\in[n], p\neq q$ such that $\lambda \in\mathcal O(\mathcal C),$ that is
$$(|\lambda|-\max\{R_{p}(\mathcal C)_{+}-P_{p}^{q}(\mathcal C)_{+},R_{p}(\mathcal C)_{-}-P_{p}^{q}(\mathcal C)_{-}\})|\lambda|$$
$$\leq\max\{P_{p}^{q}(\mathcal C)_{+},P_{p}^{q}(\mathcal C)_{-}\}\times\max\{R_{q}(\mathcal C)_{+},R_{q}(\mathcal C)_{-}\}.$$

{\bf Case I.} If $\max\{P_{p}^{q}(\mathcal C)_{+},P_{p}^{q}(\mathcal C)_{-}\}\times\max\{R_{q}(\mathcal C)_{+},R_{q}(\mathcal C)_{-}\}=0,$ then
$|\lambda|=0$ or $|\lambda|-\max\{R_{p}(\mathcal C)_{+}-P_{p}^{q}(\mathcal C)_{+},R_{p}(\mathcal C)_{-}-P_{p}^{q}(\mathcal C)_{-}\}\leq0.$ Hence, $\lambda\in\Omega_{p}(\mathcal C)\subseteq\Omega(\mathcal C).$

{\bf Case II.} If $\max\{P_{p}^{q}(\mathcal C)_{+},P_{p}^{q}(\mathcal C)_{-}\}\times\max\{R_{q}(\mathcal C)_{+},R_{q}(\mathcal C)_{-}\}>0,$ then
\begin{equation*}
\frac{|\lambda|-\max\left\{R_{p}(\mathcal{C})_{+}-P_{p}^{q}(\mathcal{C})_{+},\; R_{p}(\mathcal{C})_{-}-
P_{p}^{q}(\mathcal{C})_{-}\right\}}{\max\left\{P_{p}^{q}(\mathcal{C})_{+},\; P_{p}^{q}(\mathcal{C})_{-}\right\}}
\cdot\frac{|\lambda|}{\max\left\{R_{q}(\mathcal{C})_{+},\; R_{q}(\mathcal{C})_{-}\right\}}\leq 1,
\end{equation*}
which implies
\begin{equation*}
\frac{|\lambda| - \max\left\{R_{p}(\mathcal{C})_{+} - P_{p}^{q}(\mathcal{C})_{+},\; R_{p}(\mathcal{C})_{-} - P_{p}^{q}(\mathcal{C})_{-}\right\}}{\max\left\{P_{p}^{q}(\mathcal{C})_{+},\; P_{p}^{q}(\mathcal{C})_{-}\right\}} \leq 1
\quad \text{or} \quad
\frac{|\lambda|}{\max\left\{R_{q}(\mathcal{C})_{+},\; R_{q}(\mathcal{C})_{-}\right\}} \leq 1.
\end{equation*}
Therefore, $\lambda\in\Omega_{p}(\mathcal C)\bigcup\Omega_{q}(\mathcal C)\subseteq\Omega(\mathcal C).$
\end{proof}
\begin{dl}\label{dl:2.3}
Let $\mathcal C=(c_{i_{1}\cdots i_{m}})\in\mathbb{ R}^{[m,n]}, m,n\geq2$ and $\max_{i\in[n]}\{R_{i}(\mathcal C)_{+},R_{i}(\mathcal C)_{-}\}\neq0$ for $i\in[n]$.
$$\sigma(\mathcal C)\subseteq\mathcal F(\mathcal C)=\bigcup_{c_{i_{1}\cdots i_{m}}\neq0}\left\{\lambda\in\mathbb{R}:|\lambda|\leq\prod^{m}_{l=1}\max\left\{\left(R_{i_{l}}(\mathcal C)_{+}\right)^{\frac{1}{m}},\left(R_{i_{l}}(\mathcal C)_{-}\right)^{\frac{1}{m}}\right\}\right\}.$$
\end{dl}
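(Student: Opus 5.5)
The plan is to adapt the standard weighted-maximum argument used for Brauer/Ostrowski-type H-eigenvalue bounds. Two features need extra care: the Z-eigen equation is not homogeneous, and the row sums enter through $\max\{R_i(\mathcal C)_+,R_i(\mathcal C)_-\}$ rather than through absolute row sums. Write $K_i:=\max\{R_i(\mathcal C)_+,R_i(\mathcal C)_-\}$ for $i\in[n]$. Let $\lambda\in\sigma(\mathcal C)$ with Pareto Z-eigenvector $x\geq0$, $x^\top x=1$, and let $S=\{i: x_i>0\}\neq\emptyset$. By complementarity, $\lambda x_i=(\mathcal Cx^{m-1})_i$ for every $i\in S$.

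First I dispose of the trivial cases.
\begin{itemize}
\item If $\lambda=0$, then $\lambda\in\mathcal F(\mathcal C)$. Indeed, the hypothesis $\max_i K_i\neq0$ guarantees that some entry $c_{i_1\cdots i_m}\neq0$, so the union defining $\mathcal F(\mathcal C)$ contains a set $\{|\lambda|\leq \text{nonnegative number}\}$, which contains $0$.
\item If some $i\in S$ has $K_i=0$, then row $i$ of $\mathcal C$ vanishes. Hence $\lambda x_i=0$, so $\lambda=0$, which is the previous case.
\end{itemize}
So I may assume $\lambda\neq0$ and $K_i>0$ for all $i\in S$.

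Next I introduce the scaled vector $y_i:=x_i/K_i^{1/m}$ for $i\in S$ (and $y_i:=0$ otherwise), and choose $p\in S$ with $y_p=\max_{i}y_i>0$.
\begin{itemize}
\item Split the $p$th equation into the parts with $[c]_+$ and $[c]_-$. Since both parts are nonnegative (as $x\geq0$), Lemma \ref{yl:2.2} gives
$$|\lambda|x_p\leq \max\Big\{\sum [c_{pi_2\cdots i_m}]_+x_{i_2}\cdots x_{i_m},\ \sum [c_{pi_2\cdots i_m}]_-x_{i_2}\cdots x_{i_m}\Big\}.$$
\item Only terms with $c_{pi_2\cdots i_m}\neq0$ and all $i_l\in S$ contribute. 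In each such term, write $x_{i_l}=K_{i_l}^{1/m}y_{i_l}\leq K_{i_l}^{1/m}y_p$.
\item Bounding the product $\prod_{l\geq2}K_{i_l}^{1/m}$ by its maximum $M_p$ over the nonzero entries of row $p$, I get
$$|\lambda|K_p^{1/m}y_p\leq y_p^{m-1}K_pM_p.$$
\item Divide by $K_p^{1/m}y_p$ and substitute $y_p^{m-2}=x_p^{m-2}K_p^{-(m-2)/m}$. The exponents of $K_p$ combine to $1-\frac1m-\frac{m-2}{m}=\frac1m$, giving
$$|\lambda|\leq x_p^{m-2}\,K_p^{1/m}\prod_{l=2}^mK_{i_l}^{1/m}$$
for some entry $c_{pi_2\cdots i_m}\neq0$, namely the maximizer in $M_p$.
\end{itemize}

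Finally, since $x^\top x=1$ we have $0<x_p\leq1$, and $m\geq2$, so $x_p^{m-2}\leq1$. Because $\max\{a^{1/m},b^{1/m}\}=\max\{a,b\}^{1/m}$, the right-hand side is at most $\prod_{l=1}^m\max\{(R_{i_l}(\mathcal C)_+)^{1/m},(R_{i_l}(\mathcal C)_-)^{1/m}\}$ with $i_1=p$. Therefore $\lambda\in\mathcal F(\mathcal C)$.

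I expect the main obstacle to be the non-homogeneity: the scaling argument leaves a stray factor $y_p^{m-2}$. The step to get right is rewriting it back in terms of $x_p$ so that the powers of $K_p$ collapse exactly to $K_p^{1/m}$, and then using the normalization $x_p\leq1$ to discard $x_p^{m-2}$. A secondary point is ensuring that the chosen entry is genuinely nonzero, which holds because only nonzero entries of row $p$ contribute to the sum.
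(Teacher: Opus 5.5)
Your proof is correct, but it takes a different route from the paper's. The paper does not rescale $x$. It sets $x_\alpha:=\max\{x_{i_1}\cdots x_{i_m}: c_{i_1\cdots i_m}\neq0\}$ and bounds every monomial in row $i$ by $x_\alpha$, which gives $|\lambda|x_i^2\leq K_i\,x_\alpha$ for all $i\in[n]$ (again via Lemma \ref{yl:2.2}). If $x_\alpha=x_{j_1}\cdots x_{j_m}>0$, it multiplies the $m$ inequalities for the rows $j_1,\ldots,j_m$ of this maximizing entry. This yields $|\lambda|^m x_\alpha^2\leq x_\alpha^m\prod_l K_{j_l}$, hence $|\lambda|^m\leq x_\alpha^{m-2}\prod_l K_{j_l}\leq\prod_l K_{j_l}$, and an $m$th root finishes. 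The case $x_\alpha=0$ forces $\lambda=0$.

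So the paper combines $m$ row inequalities attached to one nonzero entry. You use a single row $p$, chosen as the weighted maximizer of $x_i/K_i^{1/m}$, with weights tuned so that the powers of $K_p$ collapse to exactly $K_p^{1/m}$.

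What each route buys:
\begin{itemize}
\item The paper's argument needs no division by $K_i$, so it avoids your preliminary reduction to $K_i>0$ on the support.
\item Your argument bounds $|\lambda|$ directly, with no product of inequalities and no final $m$th root. The paper's displayed final chain actually omits that root and is correct only once it is inserted.
\item Your argument also shows the realizing nonzero entry can be taken in row $p$. Since $p$ depends on the unknown eigenvector, this does not shrink $\mathcal F(\mathcal C)$.
\end{itemize}
Both proofs discard the same kind of factor ($x_p^{m-2}$ for you, $x_\alpha^{m-2}$ in the paper) using $x^\top x=1$, and both land in exactly the set $\mathcal F(\mathcal C)$.
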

\begin{proof}
Let $\lambda$ be a Pareto Z-eigenvalue of $\mathcal C$ with corresponding eigenvector $x$. According to the Definition \ref{dy:1.1}, then for all $i\in[n],$ it holds that
\begin{equation}\label{2.4}
\lambda x_{i}^{2}=\sum_{i_{2},\cdots,i_{m}\in [n]}c_{ii_{2}\cdots i_{m}}x_{i}x_{i_{2}}\cdots x_{i_{m}}.
\end{equation}
Denote $x_{\alpha}=\max\{x_{i_{1}} x_{i_{2}} \cdots x_{i_{m}}\}:c_{i_{1}i_{2}\cdots i_{m}}\neq0, i_{1},i_{2},\cdots,i_{m}\in[n]\},$ Taking the modulus of \eqref{2.4}, for all $i\in[n],$ we can conclude that
\begin{equation}\label{2.5}
\begin{aligned}
|\lambda| x_{i}^{2}
&= \bigg| \sum_{i_{2},\dots,i_{m} = 1}^{n} c_{ii_{2}\cdots i_{m}} x_{i} x_{i_{2}} \cdots x_{i_{m}} \bigg| \\
&= \bigg| \sum_{i_{2},\dots,i_{m} = 1}^{n} [c_{ii_{2}\cdots i_{m}}]_{+} - \sum_{i_{2},\dots,i_m = 1}^{n} [c_{ii_{2}\cdots i_{m}}]_{-} \bigg|x_{i} x_{i_{2}} \cdots x_{i_{m}} \\
&= \bigg| \sum_{c_{i_{2}\dots i_{m}} \neq 0} [c_{ii_{2}\cdots i_{m}}]_{+} - \sum_{c_{i_{2}\dots i_{m}} \neq 0} [c_{ii_{2}\cdots i_{m}}]_{-} \bigg|x_{i} x_{i_{2}} \cdots x_{i_{m}} \\
&\leq \Big( \sum_{c_{i_{2}\dots i_{m}} \neq 0} [c_{ii_{2}\cdots i_{m}}]_{+} - \sum_{c_{i_{2}\dots i_{m}} \neq 0} [c_{ii_{2}\cdots i_{m}}]_{-}\Big) x_{\alpha}\\
&\leq \max\Big\{R_{i}(\mathcal{C})_{+}, R_{i}(\mathcal{C})_{-}\Big\}x_{\alpha}.
\end{aligned}
\end{equation}
Next, we consider two cases as follows.

{\bf Case I.} If $x_{\alpha}=0.$ Since $x\neq0,$ support that $x_{t}\neq0, t\in[n],$ from \eqref{2.5}, we have $|\lambda| x_{t}^{2}=0$, which implies that $\lambda=0.$ Therefore, we obtain $\lambda\in\mathcal F(\mathcal C).$

{\bf Case II.} $x_{\alpha}>0,$ support that $x_{\alpha}=x_{j_{1}}x_{j_{2}}\cdots x_{j_{m}}.$ From \eqref{2.5}, we deduce the inequalities
\begin{equation*}
|\lambda|x_{j_{1}}^{2}\leq\max\{R_{j_{1}}(\mathcal C)_{+},R_{j_{1}}(\mathcal C)_{-}\}x_{\alpha}
\end{equation*}
\begin{center}
$\vdots$
\end{center}
\begin{equation*}
|\lambda|x_{j_{m}}^{2}\leq\max\{R_{j_{m}}(\mathcal C)_{+},R_{j_{m}}(\mathcal C)_{-}\}x_{\alpha}.
\end{equation*}
Then 
\begin{equation*}
\prod^{m}_{l=1}|\lambda|x_{j_{l}}^{2}\leq x_{\alpha}^{m}\prod^{m}_{l=1}\max\{R_{j_{l}}(\mathcal C)_{+},R_{j_{l}}(\mathcal C)_{-}\},
\end{equation*}
which imply that
\begin{equation*}
x_{\alpha}^{2}\prod^{m}_{l=1}|\lambda|\leq x_{\alpha}^{m}\prod^{m}_{l=1}\max\{R_{j_{l}}(\mathcal C)_{+},R_{j_{l}}(\mathcal C)_{-}\}.
\end{equation*}
Therefore
\begin{equation*}
\prod^{m}_{l=1}|\lambda|\leq x_{\alpha}^{m-2}\prod^{m}_{l=1}\max\{R_{j_{l}}(\mathcal C)_{+},R_{j_{l}}(\mathcal C)_{-}\}\leq\prod^{m}_{l=1}\max\{(R_{j_{l}}(\mathcal C)_{+})^{\frac{1}{m}},(R_{j_{l}}(\mathcal C)_{-})^{\frac{1}{m}}\}.
\end{equation*}
The proof is completed.
\end{proof}
\begin{dl}\label{dl:2.4}
Let $\mathcal C=(c_{i_{1}\cdots i_{m}})\in\mathbb{ R}^{[m,n]}, m, n\geq2$ and $\max_{i\in[n]}\{R_{i}(\mathcal C)_{+},R_{i}(\mathcal C)_{-}\}\neq0$, for $i\in[n],$ then
$$\mathcal F(\mathcal C)\subseteq\Omega(\mathcal C).$$
\end{dl}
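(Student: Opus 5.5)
We show that every $\lambda\in\mathcal F(\mathcal C)$ already lies in a single Gershgorin-type set $\Omega_{i}(\mathcal C)$, using a geometric-mean argument. Write $M_i:=\max\{R_i(\mathcal C)_{+},R_i(\mathcal C)_{-}\}\geq 0$. The first observation is that
$$\max\left\{\left(R_{i}(\mathcal C)_{+}\right)^{\frac{1}{m}},\left(R_{i}(\mathcal C)_{-}\right)^{\frac{1}{m}}\right\}=M_i^{\frac1m}.$$
This holds because $t\mapsto t^{1/m}$ is nondecreasing on $[0,\infty)$. Consequently the defining inequality of $\mathcal F(\mathcal C)$ for a fixed nonzero entry $c_{i_1\cdots i_m}$ reads
$$|\lambda|\leq \prod_{l=1}^{m} M_{i_l}^{\frac1m}.$$

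Now take $\lambda\in\mathcal F(\mathcal C)$. By definition there is a tuple $(i_1,\dots,i_m)$ with $c_{i_1\cdots i_m}\neq0$ for which the displayed inequality holds. Among the indices $i_1,\dots,i_m$, choose $s$ with $M_{i_s}=\max_{1\le l\le m}M_{i_l}$. Since each factor satisfies $M_{i_l}^{1/m}\le M_{i_s}^{1/m}$ and all factors are nonnegative, we obtain
$$|\lambda|\le \prod_{l=1}^m M_{i_l}^{\frac1m}\le \left(M_{i_s}^{\frac1m}\right)^m=M_{i_s}.$$
This says exactly that $\lambda\in\Omega_{i_s}(\mathcal C)$, and hence $\lambda\in\bigcup_{i\in[n]}\Omega_i(\mathcal C)=\Omega(\mathcal C)$. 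As $\lambda$ was arbitrary, $\mathcal F(\mathcal C)\subseteq\Omega(\mathcal C)$.

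No step here is a genuine obstacle; the one point needing care is the interchange of the maximum with the $m$-th root. That interchange depends only on monotonicity of $t^{1/m}$ on nonnegative reals, and the quantities $R_i(\mathcal C)_{\pm}$ are nonnegative because they are sums of positive and negative parts. The hypothesis $\max_i M_i\neq0$ only guarantees that $\mathcal F(\mathcal C)$ is taken over a nonempty index family and plays no role in the inclusion itself. If $\mathcal C$ had no nonzero entries, $\mathcal F(\mathcal C)$ would be empty and the inclusion would hold trivially.
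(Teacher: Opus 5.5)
Your proof is correct and is essentially the paper's argument run directly rather than by contradiction. The paper assumes $\lambda\notin\Omega(\mathcal C)$, i.e.\ $|\lambda|>M_i$ for every $i$, and concludes that $|\lambda|>\prod_{l=1}^m M_{i_l}^{1/m}$ for every nonzero entry; this is the contrapositive of your bound $\prod_{l=1}^m M_{i_l}^{1/m}\le \max_{l} M_{i_l}$, and your version additionally places $\lambda$ in a specific $\Omega_{i_s}(\mathcal C)$ while avoiding the paper's slip of writing $R_i$ instead of $R_{i_l}$ inside the product.
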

\begin{proof}
For any $\lambda\in\mathcal F(\mathcal C),$ If $\lambda\notin\Omega(\mathcal C),$ then $|\lambda|>\max_{i\in[n]}\{R_{i}(\mathcal C)_{+},R_{i}(\mathcal C)_{-}\}$ for each $i\in[n].$ In this case, for all $c_{i_{1}\cdots i_{m}}\neq0, i_{j}\in[n]$ and $j\in1,2,\cdots,m,$ it follows that
\begin{equation*}
|\lambda|>\prod^{m}_{l=1}\max\{(R_{i}(\mathcal C)_{+})^{\frac{1}{m}},(R_{i}(\mathcal C)_{-})^{\frac{1}{m}}\},
\end{equation*}
which contradicts the fact that $\lambda\in\mathcal F(\mathcal C).$
\end{proof}
\begin{dl}\label{dl:2.5}
Let $\mathcal C=(c_{i_{1}\cdots i_{m}})\in\mathbb{ R}^{[m,n]},$ $m,n\geq2.$ Then
$$\sigma(\mathcal C)\subseteq \mathcal B(\mathcal C)=\bigcup_{i,j\in[n], i\neq j}(\mathcal B_{i,j}^{1}(\mathcal C)\bigcup(\mathcal B_{i,j}^{2}(\mathcal C)\bigcap\Omega_{i})),$$
where $\mathcal B_{i,j}^{1}(\mathcal C)=\{\lambda\in\mathbb{R}: |\lambda|\leq\max\{R_{i}(\mathcal C)_{+}-P^{i}_{i}(\mathcal C)_{+}, R_{i}(\mathcal C)_{-}-P^{i}_{i}(\mathcal C)_{-}\}, |\lambda|\leq\max\{R_{j}(\mathcal C)_{+}-P^{j}_{j}(\mathcal C)_{+}, R_{j}(\mathcal C)_{-}-P^{j}_{j}(\mathcal C)_{-}\}\},$
$\mathcal B_{i,j}^{2}(\mathcal C)=\{\lambda\in\mathbb{R}: (|\lambda|-\max\{R_{i}(\mathcal C)_{+}-P^{i}_{i}(\mathcal C)_{+}, R_{i}(\mathcal C)_{-}-P^{i}_{i}(\mathcal C)_{-}\})(|\lambda|-\max\{R_{j}(\mathcal C)_{+}-P^{j}_{j}(\mathcal C)_{+}, R_{j}(\mathcal C)_{-}-P^{j}_{j}(\mathcal C)_{-}\})\leq\max\{P_{i}^{i}(\mathcal C)_{+},P_{i}^{i}(\mathcal C)_{-}\}\times\max\{P_{j}^{j}(\mathcal C)_{+},P_{j}^{j}(\mathcal C)_{-}\}\}.$
\end{dl}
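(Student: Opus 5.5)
Let $\lambda\in\sigma(\mathcal C)$ with Pareto Z-eigenvector $x\ge 0$, $x^\top x=1$. Pick indices $p\neq q$ with $x_p\ge x_q\ge x_k$ for all $k\neq p,q$. This is possible since $n\ge 2$, and $x_p>0$ because $x\neq0$. As in the proof of Theorem \ref{dl:2.1}, the complementarity condition gives $\lambda x_i^2=(\mathcal Cx^{m-1})_i x_i$ for every $i$. For $i=p$ I will split the sum into the terms with $p\in\{i_2,\dots,i_m\}$ and the terms with $p\notin\{i_2,\dots,i_m\}$, and write each coefficient as $[c]_+-[c]_-$. Each group is a difference of two nonnegative numbers, so Lemma \ref{yl:2.2} bounds it by the larger of the two.

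In the first group every monomial $x_{i_2}\cdots x_{i_m}$ contains $x_p$ and otherwise only factors at most $x_p$, so $x_px_{i_2}\cdots x_{i_m}\le x_p^2$. The first group is therefore at most $\max\{R_p(\mathcal C)_+-P^p_p(\mathcal C)_+,\,R_p(\mathcal C)_--P^p_p(\mathcal C)_-\}\,x_p^2$. In the second group all factors are at most $x_q$, and since $m\ge2$ the product is at most $x_px_q$. The second group is therefore at most $\max\{P^p_p(\mathcal C)_+,P^p_p(\mathcal C)_-\}\,x_px_q$. Writing $a_i=\max\{R_i(\mathcal C)_+-P^i_i(\mathcal C)_+,R_i(\mathcal C)_--P^i_i(\mathcal C)_-\}$ and $b_i=\max\{P^i_i(\mathcal C)_+,P^i_i(\mathcal C)_-\}$, this yields
$$(|\lambda|-a_p)x_p^2\le b_p x_p x_q .$$
The same argument at index $q$ uses two facts. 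Monomials containing $q$ are at most $x_px_q$, since the remaining factors are at most $x_p$. Monomials avoiding $q$ are also at most $x_p$ after the leading factor $x_q$, so they too are at most $x_px_q$; this crude bound is already enough. Doing this more carefully, I would aim for
$$(|\lambda|-a_q)x_q^2\le b_q x_px_q,$$
treating the $q$-containing part as contributing $a_q$. This is the delicate point, because that part is only bounded by $x_px_q$, not by $x_q^2$. I would handle it as the paper does: I take the ratio form and keep whichever inequality the estimates genuinely give. I also note that the fallback bound $|\lambda|x_q^2\le \max\{R_q(\mathcal C)_+,R_q(\mathcal C)_-\}x_px_q$ always holds.

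The case analysis then runs as follows. First, if $|\lambda|\le a_p$ and $|\lambda|\le a_q$, then $\lambda\in\mathcal B^1_{p,q}(\mathcal C)$. Second, the first displayed inequality gives $(|\lambda|-a_p)x_p\le b_px_q\le b_px_p$, so $|\lambda|\le a_p+b_p\le \max\{R_p(\mathcal C)_+,R_p(\mathcal C)_-\}$, using $R_+-P_++P_+$ and the max splitting. Hence $\lambda\in\Omega_p$ always. Third, if $|\lambda|>a_p$, then $x_q>0$, since otherwise the first inequality forces $|\lambda|\le a_p$. In that subcase I multiply the two inequalities when both left sides are nonnegative and divide by $x_p^2x_q^2>0$. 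This gives $(|\lambda|-a_p)(|\lambda|-a_q)\le b_pb_q$, so $\lambda\in\mathcal B^2_{p,q}(\mathcal C)\cap\Omega_p$. Fourth, if $|\lambda|>a_p$ but $|\lambda|\le a_q$, the product on the left is nonpositive, so the $\mathcal B^2$ inequality holds trivially. Finally, if $|\lambda|\le a_p$ but $|\lambda|>a_q$, I swap the roles of the two inequalities, or I land in $\mathcal B^1$ after reindexing.

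The main obstacle is justifying the second inequality at index $q$ with the sharp constant $a_q$ on $x_q^2$. I expect to follow the paper's own style of estimate there, and if necessary to rely on the $\Omega_i$ intersection to cover the unfavorable subcase.
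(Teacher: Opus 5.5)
Your skeleton is the paper's: take the two largest components $x_p\ge x_q$, derive one estimate per row, do a sign case analysis, and multiply. But the decisive estimate at index $q$, $(|\lambda|-a_q)x_q^2\le b_qx_px_q$ (the paper's \eqref{2.7}), is never proved. You flag it as the main obstacle and offer fallbacks that cannot deliver it. Combining $|\lambda|x_q^2\le\max\{R_q(\mathcal C)_+,R_q(\mathcal C)_-\}x_px_q$ with the row-$p$ estimate only gives $(|\lambda|-a_p)|\lambda|\le b_p\max\{R_q(\mathcal C)_+,R_q(\mathcal C)_-\}$, which is not the $\mathcal B^2_{p,q}(\mathcal C)$ inequality. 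Also, $\Omega_p(\mathcal C)$ enters $\mathcal B(\mathcal C)$ only through an intersection, so it cannot rescue a failed $\mathcal B^2$ estimate.

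The missing idea is one you already used silently in row $p$: because $x^\top x=1$, every component lies in $[0,1]$. Hence a monomial $x_qx_{i_2}\cdots x_{i_m}$ with $q\in\{i_2,\dots,i_m\}$ is $x_q^2$ times $m-2$ factors each at most $1$, so it is at most $x_q^2$. Your claim that this part is ``only bounded by $x_px_q$'' is mistaken. A $q$-free monomial is at most $x_qx_p$, since $x_{i_2}\le x_p$. Lemma~\ref{yl:2.2} applied to the two groups then gives \eqref{2.7} at once. Imitating the paper's displayed derivation would not have closed this step. Its intermediate bound $|\lambda|x_q^2\le a_qx_px_q+b_qx_p^2$ only yields $|\lambda|\le a_q\frac{x_p}{x_q}+b_q\frac{x_p^2}{x_q^2}$, which is weaker than \eqref{2.7}; the normalization argument is what makes the step valid.

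Your derivation of $\lambda\in\Omega_p(\mathcal C)$ also rests on a false inequality. The bound $a_p+b_p\le\max\{R_p(\mathcal C)_+,R_p(\mathcal C)_-\}$ fails in general, because $\max\{\alpha,\beta\}+\max\{\gamma,\delta\}\ge\max\{\alpha+\gamma,\beta+\delta\}$. For instance, with $m=2$, $c_{11}=1$, $c_{12}=-1$ you get $a_1=b_1=1$ while $\max\{R_1(\mathcal C)_+,R_1(\mathcal C)_-\}=1$. The paper's remark that $\Omega_p$ follows ``by virtue of \eqref{2.6}'' has the same defect. Instead, apply Lemma~\ref{yl:2.2} to row $p$ without splitting. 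Every monomial $x_px_{i_2}\cdots x_{i_m}$ is at most $x_p^2$, so $|\lambda|\le\max\{R_p(\mathcal C)_+,R_p(\mathcal C)_-\}$.

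Finally, your last case ($|\lambda|\le a_p$, $|\lambda|>a_q$) needs no swapping or reindexing. Swapping would require $\Omega_q(\mathcal C)$ rather than $\Omega_p(\mathcal C)$, and $\mathcal B^1_{p,q}(\mathcal C)$ needs both bounds. Simply note that $(|\lambda|-a_p)(|\lambda|-a_q)\le 0\le b_pb_q$, so $\lambda\in\mathcal B^2_{p,q}(\mathcal C)\cap\Omega_p(\mathcal C)$. With these repairs your case analysis is complete and matches the paper's.
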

\begin{proof}
Let $\lambda$ be a Pareto Z-eigenvalue of $\mathcal C$ with corresponding eigenvector $x$. Let $|x_{p}|\geq|x_{q}|\geq \max \{x_{i}:i\in[n], i\neq p, i\neq q\}$. Then it is evident that $x_{p}>0.$ From Definition \ref{dy:1.1}, we have
\begin{equation*}
\begin{aligned}
|\lambda|x_{p}^{2}&=\Bigg|\sum_{\substack{i_{2},\cdots,i_{m}\in [n] \\ p\in\{i_{2},\cdots,i_{m}\}}}c_{pi_{2}\cdots i_{m}}x_{p}x_{i_{2}}\cdots x_{i_{m}}+\sum_{\substack{i_{2},\cdots,i_{m}\in [n] \\ p\notin\{i_{2},\cdots,i_{m}\}}}c_{pi_{2}\cdots i_{m}}x_{p}x_{i_{2}}\cdots x_{i_{m}}\Bigg|\\
&\leq\max\left\{\sum_{\substack{i_{2},\cdots,i_{m}\in [n] \\ p\in \{i_{2},\cdots,i_{m}\}}}[c_{pi_{2}\cdots i_{m}}]_{+}x_{p}x_{i_{2}}\cdots x_{i_{m}},\sum_{\substack{i_{2},\cdots,i_{m}\in [n] \\ p\in \{i_{2},\cdots,i_{m}\}}}[c_{pi_{2}\cdots i_{m}}x_{i_{2}}]_{-}x_{p}x_{i_{2}}\cdots x_{i_{m}}\right\}\\
&+\max\left\{\sum_{\substack{i_{2},\cdots,i_{m}\in [n] \\ p\notin \{i_{2},\cdots,i_{m}\}}}[c_{pi_{2}\cdots i_{m}}]_{+}x_{p}x_{i_{2}}\cdots x_{i_{m}},\sum_{\substack{i_{2},\cdots,i_{m}\in [n] \\ p\notin \{i_{2},\cdots,i_{m}\}}}[c_{pi_{2}\cdots i_{m}}]_{-}x_{p}x_{i_{2}}\cdots x_{i_{m}}\right\}\\
&\leq\max\left\{\sum_{\substack{i_{2},\cdots,i_{m}\in [n] \\ p\in \{i_{2},\cdots,i_{m}\}}}[c_{pi_{2}\cdots i_{m}}]_{+},\sum_{\substack{i_{2}\cdots i_{m}\in [n] \\ p\in \{i_{2},\cdots,i_{m}\}}}[c_{pi_{2}\cdots i_{m}}]_{-}\right\}x_{p}^{2}\\
&+\max\left\{\sum_{\substack{i_{2},\cdots,i_{m}\in [n] \\ p\notin \{i_{2},\cdots,i_{m}\}}}[c_{pi_{2}\cdots i_{m}}]_{+},\sum_{\substack{i_{2},\cdots,i_{m}\in [n] \\ p\notin \{i_{2},\cdots,i_{m}\}}}[c_{pi_{2}\cdots i_{m}}]_{-}\right\}x_{p}x_{q}.
\end{aligned}
\end{equation*}
Furthermore, we obtain
\begin{equation*}
\begin{aligned}
&\Big(|\lambda|-\max\Big\{\sum_{\substack{i_{2},\cdots,i_{m}\in [n] \\ p\in \{i_{2},\cdots,i_{m}\}}}[c_{pi_{2}\cdots i_{m}}]_{+},\sum_{\substack{i_{2},\cdots,i_{m}\in [n] \\ p\in \{i_{2},\cdots,i_{m}\}}}[c_{pi_{2}\cdots i_{m}}]_{-}\Big\}\Big)x_{p}^{2}\\
&\leq\max\Big\{\sum_{\substack{i_{2},\cdots,i_{m}\in [n] \\ p\notin \{i_{2},\cdots,i_{m}\}}}[c_{pi_{2}\cdots i_{m}}]_{+},\sum_{\substack{i_{2},\cdots,i_{m}\in [n] \\ p\notin \{i_{2},\cdots,i_{m}\}}}[c_{pi_{2}\cdots i_{m}}]_{-}\Big\}x_{p}x_{q},
\end{aligned}
\end{equation*}
that is
\begin{equation}\label{2.6}
\left(|\lambda|-\max\{R_{p}(\mathcal C)_{+}-P_{p}^{p}(\mathcal C)_{+},R_{p}(\mathcal C)_{-}-P_{p}^{p}(\mathcal C)_{-}\}\right)\leq\max\{P_{p}^{p}(\mathcal C)_{+},P_{p}^{p}(\mathcal C)_{-}\}\frac{x_{q}}{x_{p}}.
\end{equation}
It is easy to get $\lambda\in\Omega_{p}(\mathcal C)$ by virtue of the Inequality \eqref{2.6}.

{\bf Case I.} If $x_{q}=0,$ then $|\lambda|-\max\{R_{p}(\mathcal C)_{+}-P_{p}^{p}(\mathcal C)_{+},R_{p}(\mathcal C)_{-}-P_{p}^{p}(\mathcal C)_{-}\}\leq0.$ If $(|\lambda|-\max\{R_{q}(\mathcal C)_{+}-P_{q}^{q}(\mathcal C)_{+},R_{q}(\mathcal C)_{-}-P_{q}^{q}(\mathcal C)_{-}\})>0,$ then we have
\begin{equation*}
\begin{aligned}
 (|\lambda|-\max\{R_{p}(\mathcal C)_{+}-P_{p}^{p}(\mathcal C)_{+},R_{p}(\mathcal C)_{-}-P_{p}^{p}(\mathcal C)_{-}\})(|\lambda|-\max\{R_{q}(\mathcal C)_{+}-P_{q}^{q}(\mathcal C)_{+},R_{q}(\mathcal C)_{-}-P_{q}^{q}(\mathcal C)_{-}\})
\end{aligned}
\end{equation*}
$$\leq0\leq\max\{P_{p}^{p}(\mathcal C)_{+},P_{p}^{p}(\mathcal C)_{-}\}\times\max\{P_{q}^{q}(\mathcal C)_{+},P_{q}^{q}(\mathcal C)_{-}\},$$
which implies $\lambda\in\mathcal B_{p,q}^{2}(\mathcal C)\bigcap\Omega_{p}(\mathcal C)\subseteq\mathcal B(\mathcal C).$ And if $(|\lambda|-\max\{R_{q}(\mathcal C)_{+}-P_{q}^{q}(\mathcal C)_{+},R_{q}(\mathcal C)_{-}-P_{q}^{q}(\mathcal C)_{-}\})\leq0,$ we obtain $\lambda\in\mathcal B_{p,q}^{1}(\mathcal C)\subseteq\mathcal B(\mathcal C).$

{\bf Case II.} If $x_{q}>0,$ it holds that
\begin{equation*}
\begin{aligned}
|\lambda| x_{q}^{2} &\leq \max\left\{ 
    \sum_{\substack{i_2,\dots,i_m \in [n] \\ q \in \{i_2,\dots,i_m\}}} 
    [c_{q i_2 \cdots i_m}]_{+},\ 
    \sum_{\substack{i_2,\dots,i_m \in [n] \\ q \in \{i_2,\dots,i_m\}}} 
    [c_{q i_2 \cdots i_m}]_{-}
\right\} x_{p} x_{q}\\
&+\max\left\{\sum_{\substack{i_2,\dots,i_m \in [n] \\ q \notin \{i_2,\dots,i_m\}}}[c_{q i_2 \cdots i_m}]_{+},\sum_{\substack{i_2,\dots,i_m \in [n] \\ q\in\{i_2,\dots,i_m\}}}[c_{q i_2 \cdots i_m}]_{-}\right\}x_p^{2}\\
&=\max\{R_{q}(\mathcal C)_{+}-P_{q}^{q}(\mathcal C)_{+},R_{q}(\mathcal C)_{-}-P_{q}^{q}(\mathcal C)_{-}\}x_{p}x_{q}+\max\{P_{q}^{q}(\mathcal C)_{+},P_{q}^{q}(\mathcal C)_{-}\}x_{p}^{2}.
\end{aligned}
\end{equation*}
Furthermore, we obtain
\begin{equation}\label{2.7}
\Big(|\lambda|-\max\{R_{q}(\mathcal C)_{+}-P_{q}^{q}(\mathcal C)_{+},R_{q}(\mathcal C)_{-}-P_{q}^{q}(\mathcal C)_{-}\}\Big)\leq\max\{P_{q}^{q}(\mathcal C)_{+},P_{q}^{q}(\mathcal C)_{-}\}\frac{x_{p}}{x_{q}}.
\end{equation}
If $|\lambda|-\max\{R_{p}(\mathcal C)_{+}-P_{p}^{p}(\mathcal C)_{+},R_{p}(\mathcal C)_{-}-P_{p}^{p}(\mathcal C)_{-}\}>0$ or $|\lambda|-\max\{R_{q}(\mathcal C)_{+}-P_{q}^{q}(\mathcal C)_{+},R_{q}(\mathcal C)_{-}-P_{q}^{q}(\mathcal C)_{-}\}>0.$
We have the following inequality by multiplying \eqref{2.6} and \eqref{2.7} noting that $x_{p}x_{q}>0,$ we get
\begin{equation*}
\begin{aligned}
&\big(|\lambda|-\max\{R_{p}(\mathcal C)_{+}-P_{p}^{p}(\mathcal C)_{+},R_{p}(\mathcal C)_{-}-P_{p}^{p}(\mathcal C)_{-}\}\big)\\
&\big(|\lambda|-\max\{R_{q}(\mathcal C)_{+}-P_{q}^{q}(\mathcal C)_{+},R_{q}(\mathcal C)_{-}-P_{q}^{q}(\mathcal C)_{-}\}\big)\\
&\leq\max\{P_{p}^{p}(\mathcal C)_{+},P_{p}^{p}(\mathcal C)_{-}\}\times\max\{P_{q}^{q}(\mathcal C)_{+},P_{q}^{q}(\mathcal C)_{-}\}.
\end{aligned}
\end{equation*}
This shows $\lambda\in\mathcal B_{p,q}^{2}(\mathcal C)\bigcap\Omega(\mathcal C)\subseteq\mathcal B(\mathcal C).$
If $|\lambda|\leq\max\{R_{p}(\mathcal C)_{+}-P_{p}^{p}(\mathcal C)_{+},R_{p}(\mathcal C)_{-}-P_{p}^{p}(\mathcal C)_{-}\}$ and  $|\lambda|\leq\max\{R_{q}(\mathcal C)_{+}-P_{q}^{q}(\mathcal C)_{+},R_{q}(\mathcal C)_{-}-P_{q}^{q}(\mathcal C)_{-}\},$ then $\lambda\in\mathcal B_{p,q}^{1}(\mathcal C)\bigcap\Omega(\mathcal C)\subseteq\mathcal B(\mathcal C).$
Therefore, the conclusion $\lambda\in\bigcup_{i,j\in[n], i\neq j}\left(\mathcal B_{i,j}^{1}(\mathcal C)\bigcup(\mathcal B_{i,j}^{2}(\mathcal C)\bigcap\Omega_{i})\right)$ follows directly from the above results, which contradicts the fact that $\lambda\in\mathcal F(\mathcal C).$
\end{proof}
\noindent{\bf Example 1} Consider $\mathcal C=(c_{i_{1}i_{2}i_{3}})\in\mathbb{R}^{[3,3]}$ defined by\\ 
$c_{111} = 1, c_{112} = -1, c_{131} = 1,  c_{133} = 1, c_{211} = -1,  c_{222} = 2, c_{232} = 1,$ $c_{311} = 1,  c_{322} = 3, c_{323} = 1,$\\ $c_{ijk} = 0$, and other entries be all zero.
By calculating, we have\\
Based on Theorem 1 in \cite{Yang1}, we conclude that
\begin{equation*}
  \Psi(\mathcal{C})=\{\lambda \in \mathbb{R}: -\sqrt{2} \le \lambda \le \sqrt{19}\}.
\end{equation*}
In light of Theorem 2 in \cite{Yang1}, we obtain
\begin{equation*}
\Omega(\mathcal{C})=\bigcup_{i=1}^3 \Omega_i(\mathcal{C}) = \{\lambda \in \mathbb{R}: |\lambda| \le 5\}.
\end{equation*}
From Theorem 3 in \cite{Yang1}, it follows that
\begin{equation*}
\Phi(\mathcal{C})=\bigcup_{i \in [n]} \bigcap_{j \in [n], j \neq i} \Phi_{ij}(\mathcal{C}) 
                    = \{\lambda \in \mathbb{R}: |\lambda| \le 1 + \sqrt{10}\}.
\end{equation*} 
Based on Theorem 3.1 in \cite{Wang2},  it holds that
\begin{equation*}
\gamma(\mathcal C)= \{\lambda \in \mathbb{R}: -\frac{\sqrt{2}}{2}\leq\lambda\leq4.0285\}.
\end{equation*}                                    
By Theorem 2.3 in \cite{He2}, we get 
\begin{equation*}
\sum(\mathcal{K})= \{\lambda \in \mathbb{R}: -2\leq\lambda\leq5\}.
\end{equation*}
From Theorem \ref{dl:2.1}, it follows that
\begin{equation*}
\mathcal O(\mathcal C)=\bigcup_{i\in[n]}\bigcap_{j\in[n] \ i\neq j}\mathcal O_{i,j}(\mathcal C)=\{\lambda \in \mathbb{R}: |\lambda| \leq 4\}.
\end{equation*}
\begin{table}[htbp]
\centering
\begin{tabular}{@{}l@{\hspace{1em}}l@{}}
\hline
$\mathcal O_{1,2}: |\lambda| \leq {\small \frac{1+\sqrt{37}}{2}}$ & $\mathcal O_{1,3}: |\lambda| \leq 1+\sqrt{6}$ \\
$\mathcal O_{2,1}: |\lambda| \leq {\small \frac{1+\sqrt{37}}{2}}$ & $\mathcal O_{2,3}: |\lambda| \leq {\small \frac{1+\sqrt{41}}{2}}$ \\
$\mathcal O_{3,1}: |\lambda| \leq   4$ & $\mathcal O_{3,2}: |\lambda| \leq 2+\sqrt{7}$ \\
\hline
\end{tabular}
\end{table}

\noindent Base on Theorem \ref{dl:2.3}, It holds that
\begin{equation*}
\mathcal F(\mathcal C)=\sqrt[3]{75}.
\end{equation*}
It follows from Theorem \ref{dl:2.5} that
\begin{equation*}
 \mathcal B(\mathcal C)=\bigcup_{i,j\in[n], i\neq j}(\mathcal B_{i,j}^{1}(\mathcal C)\bigcup(\mathcal B_{i,j}^{2}(\mathcal C)\bigcap\Omega_{i}(\mathcal C)))=\{\lambda \in \mathbb{R}: |\lambda| \leq 2+\sqrt{15}\}.
\end{equation*}
\begin{table}[htbp]
\centering
\begin{tabular}{@{}l@{\hspace{1em}}l@{}}
\hline
$\mathcal B_{1,2}: |\lambda| \leq 3$ & $\mathcal B_{1,3}: |\lambda| \leq 3$ \\
$\mathcal B_{2,1}: |\lambda| \leq 3$ & $\mathcal B_{2,3}: |\lambda| \leq 3$ \\
$\mathcal B_{3,1}: |\lambda| \leq  {\small \frac{3+\sqrt{17}}{2}}$ & $\mathcal B_{3,2}: |\lambda| \leq 2+\sqrt{15}$ \\
\hline
\end{tabular}
\end{table}

Example 1 demonstrates that the newly proposed Pareto Z-eigenvalue localization sets in this paper are shaper than some existing ones\cite{Yang1, Wang2, He2}.
\section{Applications}
\subsection{A sufficient criteria for judging copositivity (strict copositivity)}
Based on the Pareto Z-eigenvalue localization intervals in Theorem \ref{dl:2.1}, we give a sufficient condition for a real symmetric tensor to be copositive (strictly copositive), which are presented in the following theorem.
\begin{dl}\label{dl:3.1}
Let $\mathcal C=(c_{i_{1}\cdots i_{m}})\in\mathbb{ R}^{[m,n]}, m, n\geq2$ and $c_{i\cdots i}>0.$ If the following condition holds\\
\begin{equation}\label{3.1}
\begin{aligned}
&c_{ii\cdots i}+\frac{1}{{(m-1)}^\frac{m-1}{2}}\sum_{\substack{i_{2},\cdots,i_{m}\in [n] \\\delta_{ii_{2}\cdots i_{m}=0}\\ j\in\{i_{2},\cdots,i_{m}\}}}[c_{ii_{2}\cdots i_{m}}]_{+}\\
&+\frac{1}{{(m-1)}^\frac{m-1}{2}}\sum_{\substack{i_{2},\cdots,i_{m}\in [n] \\\delta_{ii_{2}\cdots i_{m}=0}\\ j\notin\{i_{2},\cdots,i_{m}\}}}[c_{ii_{2}\cdots i_{m}}]_{+}-R_{i}(\mathcal C)_{-}\geq(>)0,
\end{aligned}
\end{equation}
for all $i,j\in[n], i\neq j,$ then $\mathcal C$ is copositive (strictly copositive).
\end{dl}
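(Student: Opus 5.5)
The plan is to argue by contradiction through Lemma \ref{yl:2.1}. Suppose $\mathcal C$ is not copositive. Then $\mathcal C$ has a Pareto Z-eigenvalue $\lambda<0$ with eigenvector $x\ge 0$, $x^\top x=1$. (For the strict case, suppose instead that some Pareto Z-eigenvalue satisfies $\lambda\le 0$.) Let $x_p=\max_i x_i>0$, and let $x_q$ be the second largest entry, $q\neq p$. On the support of $x$ the complementarity condition gives $(\mathcal C x^{m-1})_p=\lambda x_p$. Multiplying by $x_p$, as in \eqref{2.1}, yields
$$\lambda x_p^2=c_{p\cdots p}x_p^m+\sum_{\delta_{pi_2\cdots i_m}=0}c_{pi_2\cdots i_m}x_px_{i_2}\cdots x_{i_m}.$$

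The key is to bound the right-hand side from below rather than in absolute value. Since $x\ge0$, we split each off-diagonal coefficient as $[c]_+-[c]_-$.

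\emph{Negative part.} Each monomial satisfies $x_px_{i_2}\cdots x_{i_m}\le x_p^m$. So the negative part contributes at least $-\sum_{\delta=0}[c_{pi_2\cdots i_m}]_-\,x_p^m\ge -R_p(\mathcal C)_-\,x_p^m$. This uses $c_{p\cdots p}>0$, so the diagonal entry does not enter $R_p(\mathcal C)_-$.

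\emph{Positive part.} Here one wants these terms to help, which requires a lower bound of the form $x_{i_2}\cdots x_{i_m}\ge (m-1)^{-(m-1)/2}x_p^{m-1}$. This is the direction opposite to Lemma \ref{yl:2.3}, so it is the main obstacle. In general small entries of $x$ make such products vanish, and the positive terms can only be dropped, i.e., bounded below by $0$. My plan is therefore to reorganize. I would write the condition \eqref{3.1} as saying that $c_{p\cdots p}$ plus nonnegative quantities dominates $R_p(\mathcal C)_-$. I would then first try to show that $c_{p\cdots p}-R_p(\mathcal C)_-$ alone, or with the $q$-split used in Theorem \ref{dl:2.1}, controls the sign. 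Concretely, I would separate the index tuples containing $q$ from those not containing $q$, exactly as in the two sums of \eqref{3.1}. I would use $x_q\le x_p$, and Lemma \ref{yl:2.3} with $k=m-1$ applied to the normalized vector, to compare the products with $x_p^{m-1}$ up to the factor $(m-1)^{-(m-1)/2}$. This produces inequalities of the shape of \eqref{2.2} and \eqref{2.3} with $\lambda$ replaced by its signed value.

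Combining these gives
$$\lambda x_p^2\ge\Big(c_{p\cdots p}+\tfrac{1}{(m-1)^{(m-1)/2}}(\text{positive sums})-R_p(\mathcal C)_-\Big)x_p^m\cdot(\text{positive factor})\ge0,$$
which contradicts $\lambda<0$; in the strict case, $>0$ contradicts $\lambda\le0$. An alternative route is to invoke Theorem \ref{dl:2.1} directly: show that \eqref{3.1} forces every $\lambda$ in $\mathcal O(\mathcal C)$ with $\lambda<0$ to be excluded. This works by estimating $\max\{R_p(\mathcal C)_+-P_p^q(\mathcal C)_+,\dots\}$ and $P_p^q$ against the two sums in \eqref{3.1}.

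I expect the delicate step to be justifying the factor $(m-1)^{-(m-1)/2}$ in the positive sums, so I would check carefully whether those terms can legitimately be used with a favorable sign. If not, I would fall back on simply discarding them, which still gives the result under the stronger hypothesis $c_{p\cdots p}\ge R_p(\mathcal C)_-$.
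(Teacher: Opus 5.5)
\textbf{There is a genuine gap, and it cannot be closed: the statement is false as written.}

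The gap is exactly the step you flagged. The positive off-diagonal terms admit no lower bound of the form $x_{i_2}\cdots x_{i_m}\ge (m-1)^{-(m-1)/2}x_p^{m-1}$, since they vanish as soon as some $x_{i_k}=0$. Lemma \ref{yl:2.3} bounds such products only from above. So your final display $\lambda x_p^2\ge(\cdots)x_p^m\cdot(\text{positive factor})$ is asserted, not derived.

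Your fallback ideas do not rescue it. Splitting by $q$ cannot help, because for every $j$ the two sums in \eqref{3.1} add up to $R_i(\mathcal C)_+-c_{i\cdots i}$. The hypothesis is therefore just $c_{i\cdots i}+(m-1)^{-(m-1)/2}\bigl(R_i(\mathcal C)_+-c_{i\cdots i}\bigr)\ge R_i(\mathcal C)_-$, with no $j$-dependence to exploit. The route through Theorem \ref{dl:2.1} is also hopeless. $\mathcal O(\mathcal C)$ depends on $\lambda$ only through $|\lambda|$ and always contains $0$, so it cannot exclude a negative $\lambda$ without excluding $|\lambda|$ too; it carries no sign information.

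No argument can fill the gap, because the statement is false. For $m=2$, which the hypotheses allow, the factor is $1$ and \eqref{3.1} only says that every row sum is nonnegative. The symmetric matrix
\[
C=\begin{pmatrix}1&-2&2\\-2&1&2\\2&2&1\end{pmatrix}
\]
has positive diagonal, and the left-hand sides of \eqref{3.1} are $1,1,5>0$. Yet $x=(1,1,0)^\top$ gives $x^\top Cx=-2$.

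For $m=3$, take the symmetric $\mathcal C\in\mathbb R^{[3,3]}$ defined as follows:
\begin{itemize}
\item $c_{111}=c_{222}=c_{333}=1$;
\item every permutation of $c_{112}$ and of $c_{122}$ equal to $-1$;
\item every permutation of $c_{113},c_{133},c_{223},c_{233}$ equal to $2$;
\item all other entries $0$.
\end{itemize}
The left-hand sides of \eqref{3.1} are $1,1,7$ for $i=1,2,3$, but $\mathcal Cx^3=-4$ at $x=(1,1,0)^\top$. In both examples, every positive entry that makes the hypothesis hold involves an index on which the test vector vanishes. That is precisely why those entries cannot be credited.

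The paper's proof gets past this point only through invalid steps. It uses Lemma \ref{yl:2.3} in the wrong direction: $\sum[c_{pi_2\cdots i_m}]_+x_{i_2}\cdots x_{i_m}$ is bounded \emph{above} by $(m-1)^{-(m-1)/2}\sum[c_{pi_2\cdots i_m}]_+$, and the proof treats this as a lower bound. Earlier, it also replaces $c_{p\cdots p}x_p^m$ by the larger quantity $c_{p\cdots p}x_p^2$ inside a lower estimate.

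What your argument does prove correctly is the fallback. With $x_p=\max_i x_i>0$, complementarity gives
\[
\lambda x_p=(\mathcal Cx^{m-1})_p\ge\bigl(c_{p\cdots p}-R_p(\mathcal C)_-\bigr)x_p^{m-1}.
\]
Hence $c_{i\cdots i}\ge(>)\,R_i(\mathcal C)_-$ for all $i$ forces every Pareto Z-eigenvalue to be nonnegative (positive). Lemma \ref{yl:2.1} then gives copositivity (strict copositivity) for symmetric $\mathcal C$, which is the setting where that lemma actually holds. This weaker criterion already covers both examples in Section 3:
\begin{itemize}
\item In Example 2 the positive off-diagonal sums are zero.
\item In the $\mathbb{Z}_3$ example, $v_{1111}=v_{2222}=v_{3333}=\pi$, against $R_1(\mathcal V)_-=R_2(\mathcal V)_-=\pi/4$ and $R_3(\mathcal V)_-=\pi/2$.
\end{itemize}
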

\begin{proof}
Suppose that $(\lambda,x)$ is a Pareto Z-eigenpair of $\mathcal C$. Let $0<x_{p}= \max_{i\in[n]}x_{i}$ and according to Definition \ref{dy:1.1}, it holds that
\begin{equation*}
\begin{aligned}
\lambda x_{p}^{2}&=c_{pp\cdots p}x_{p}^{m}+\sum_{\substack{i_{2},\cdots,i_{m}\in [n] \\\delta_{pi_{2}\cdots i_{m}=0} \\ q\in\{i_{2},\cdots,i_{m}\}}}[c_{pi_{2}\cdots i_{m}}]_{+}x_{p}x_{i_{2}}\cdots x_{i_{m}}-\sum_{\substack{i_{2},\cdots,i_{m}\in [n] \\\delta_{pi_{2}\cdots i_{m}=0}\\ q\in\{i_{2},\cdots,i_{m}\}}}[c_{pi_{2}\cdots i_{m}}]_{-}x_{p}x_{i_{2}}\cdots x_{i_{m}}\\
&+\sum_{\substack{i_{2},\cdots,i_{m}\in [n] \\\delta_{pi_{2}\cdots i_{m}=0}\\ q\notin\{i_{2},\cdots,i_{m}\}}}[c_{pi_{2}\cdots i_{m}}]_{+}x_{p}x_{i_{2}}\cdots x_{i_{m}}-\sum_{\substack{i_{2},\cdots,i_{m}\in [n] \\\delta_{pi_{2}\cdots i_{m}=0} \\ q\notin\{i_{2},\cdots,i_{m}\}}}[c_{pi_{2}\cdots i_{m}}]_{-}x_{p}x_{i_{2}}\cdots x_{i_{m}}.
\end{aligned}
\end{equation*}
From Lemma \ref{2.2}, we obtain
\begin{equation*}
\begin{aligned}
\lambda x_{p}^{2} &\geq c_{pp\cdots p}x_{p}^{2}+\sum_{\substack{i_{2},\cdots,i_{m}\in [n] \\\delta_{pi_{2}\cdots i_{m}=0}\\ q\in\{i_{2},\cdots,i_{m}\}}}[c_{pi_{2}\cdots i_{m}}]_{+}x_{p}x_{i_{2}}\cdots x_{i_{m}}-\sum_{\substack{i_{2},\cdots,i_{m}\in [n] \\\delta_{pi_{2}\cdots i_{m}=0}\\ q\in\{i_{2},\cdots,i_{m}\}}}[c_{pi_{2}\cdots i_{m}}]_{-}x_{p}^{2}\\
&+\sum_{\substack{i_{2},\cdots,i_{m}\in [n] \\\delta_{pi_{2}\cdots i_{m}=0}\\ q\notin\{i_{2},\cdots,i_{m}\}}}[c_{pi_{2}\cdots i_{m}}]_{+}x_{p}x_{i_{2}}\cdots x_{i_{m}}-\sum_{\substack{i_{2},\cdots,i_{m}\in [n]\\\delta_{pi_{2}\cdots i_{m}=0} \\ q\notin\{i_{2},\cdots,i_{m}\}}}[c_{pi_{2}\cdots i_{m}}]_{-}x_{p}^{2}.
\end{aligned}
\end{equation*}
Furthermore, we have
\begin{equation*}
\begin{aligned}
\Big(\lambda-c_{pp\cdots p}+\sum_{\substack{i_{2},\cdots,i_{m}\in [n] \\\delta_{pi_{2}\cdots i_{m}=0}\\ q\in\{i_{2},\cdots,i_{m}\}}}[c_{pi_{2}\cdots i_{m}}]_{-}
+\sum_{\substack{i_{2},\cdots,i_{m}\in [n] \\\delta_{pi_{2}\cdots i_{m}=0}\\ q\notin\{i_{2},\cdots,i_{m}\}}}[c_{pi_{2}\cdots i_{m}}]_{-}\Big)x_{p}\\
\geq\frac{1}{{(m-1)}^\frac{m-1}{2}}\sum_{\substack{i_{2},\cdots,i_{m}\in [n] \\\delta_{pi_{2}\cdots i_{m}=0}\\ q\in\{i_{2},\cdots,i_{m}\}}}[c_{pi_{2}\cdots i_{m}}]_{+}
+\frac{1}{{(m-1)}^\frac{m-1}{2}}\sum_{\substack{i_{2},\cdots,i_{m}\in [n] \\\delta_{pi_{2}\cdots i_{m}=0}\\ q\notin\{i_{2},\cdots,i_{m}\}}}[c_{pi_{2}\cdots i_{m}}]_{+}.
\end{aligned}
\end{equation*}
Since 
\begin{equation*}
\begin{aligned}
\lambda-c_{pp\cdots p}+\sum_{\substack{i_{2},\cdots,i_{m}\in [n] \\\delta_{pi_{2}\cdots i_{m}=0}\\ q\in\{i_{2},\cdots,i_{m}\}}}[c_{pi_{2}\cdots i_{m}}]_{-}
+\sum_{\substack{i_{2},\cdots,i_{m}\in [n] \\\delta_{pi_{2}\cdots i_{m}=0}\\ q\notin\{i_{2},\cdots,i_{m}\}}}[c_{pi_{2}\cdots i_{m}}]_{-}\\
\geq\Big(\lambda-a_{pp\cdots p}+\sum_{\substack{i_{2},\cdots,i_{m}\in [n] \\\delta_{pi_{2}\cdots i_{m}=0}\\ q\in\{i_{2},\cdots,i_{m}\}}}[c_{pi_{2}\cdots i_{m}}]_{-}
+\sum_{\substack{i_{2},\cdots,i_{m}\in [n] \\\delta_{pi_{2}\cdots i_{m}=0}\\ q\notin\{i_{2},\cdots,i_{m}\}}}[c_{pi_{2}\cdots i_{m}}]_{-}\Big)x_{p},
\end{aligned}
\end{equation*}
we get 
\begin{equation*}
\begin{aligned}
\lambda &\geq c_{pp\cdots p}+\frac{1}{{(m-1)}^\frac{m-1}{2}}\sum_{\substack{i_{2},\cdots,i_{m}\in [n] \\\delta_{pi_{2}\cdots i_{m}=0}\\ q\in\{i_{2},\cdots,i_{m}\}}}[c_{pi_{2}\cdots i_{m}}]_{+}-\sum_{\substack{i_{2},\cdots,i_{m}\in [n] \\\delta_{pi_{2}\cdots i_{m}=0}\\ q\in\{i_{2},\cdots,i_{m}\}}}[c_{pi_{2}\cdots i_{m}}]_{-}\\
&+\frac{1}{{(m-1)}^\frac{m-1}{2}}\sum_{\substack{i_{2},\cdots,i_{m}\in [n] \\\delta_{pi_{2}\cdots i_{m}=0}\\ q\notin\{i_{2},\cdots,i_{m}\}}}[c_{pi_{2}\cdots i_{m}}]_{+}-\sum_{\substack{i_{2},\cdots,i_{m}\in [n] \\\delta_{pi_{2}\cdots i_{m}=0}\\ q\notin\{i_{2},\cdots,i_{m}\}}}[c_{pi_{2}\cdots i_{m}}]_{-},
\end{aligned}
\end{equation*}
that is
\begin{equation}\label{3.3}
\begin{aligned}
\lambda &\geq c_{pp\cdots p}+\frac{1}{{(m-1)}^\frac{m-1}{2}}\sum_{\substack{i_{2},\cdots,i_{m}\in [n] \\\delta_{pi_{2}\cdots i_{m}=0}\\ q\in\{i_{2},\cdots,i_{m}\}}}[c_{pi_{2}\cdots i_{m}}]_{+}\\
&+\frac{1}{{(m-1)}^\frac{m-1}{2}}\sum_{\substack{i_{2},\cdots,i_{m}\in [n] \\\delta_{pi_{2}\cdots i_{m}=0}\\ q\notin\{i_{2},\cdots,i_{m}\}}}[c_{pi_{2}\cdots i_{m}}]_{+}-R_{p}(\mathcal C)_{-}.
\end{aligned}
\end{equation}

Multiplying (\ref{3.1}) and (\ref{3.3}) implies $\lambda\geq(>)0$, hence $\mathcal C$ is (copositive copositive).
\end{proof}
The following example demonstrates that Theorem \ref{3.1} can be effectively applied to verify the strict copositivity of tensors.\\
{\bf Example 2} Consider $\mathcal C=(c_{i_{1}i_{2}i_{3}})\in\mathbb{R}^{[3,2]}$ defined by\\
$c_{111}=4, c_{112}=-1, c_{121}=-1, c_{122}=0, c_{222}=2, c_{211}=-1, c_{212}=0, c_{221}=0$.\\
From Theorem \ref{3.1}, it holds that\\
\begin{equation*}\quad
i=1, \quad j=2, \quad 2>0,
\end{equation*}
\begin{equation*}\quad
i=2, \quad j=1, \quad 1>0,
\end{equation*}
which means that $\mathcal C$ is strictly copositive.
\subsection{A sufficient condition for the solution of the TCP and TGEiCP }
The tensor complementarity problem, denoted by TCP$(p,\mathcal A)$, is to find $x\in\mathbb{R}^{n}$ such that
\begin{equation}\label{3.3}
x\geq0, \qquad p+\mathcal Cx^{m-1}\geq0, \qquad \mbox{and} \qquad x^{\top}( p+\mathcal Cx^{m-1})=0,
\end{equation}
which is also a generalization of the linear complementarity problem. The tensor complementarity problem TCP$(p,\mathcal C)$  has found numerous applications in DNA micro-arrays, communication systems, and n-person non-cooperative games \cite{Huang1, Luo1}, attracting considerable research attention in recent years.

Copositive tensors constitute a fundamental class in the study of tensor complementarity problems. Che et al.\cite{Che1} established that the tensor complementarity problem with a copositive tensor (strictly copositive tensor) necessarily possesses a nonempty and compact solution set. Song et al.\cite{Song2} proved the equivalence between semi-positivity (strict semi-positivity) and copositivity (strict copositivity) for real symmetric tensors. Building upon this equivalence, Song et al.\cite{Song2, Song3} further developed a series of results for tensor complementarity problems with semi-positive tensors (strictly semi-positive tensors).

\begin{dl}\label{dl:3.2}(\cite{Song2, Song3, Che2}
Let $\mathcal C\in\mathbb{R}^{[m,n]}$ be a given symmetric tensor. Then the following results hold.\\ 
(1) If $\mathcal C$ is strictly copositive, then the TCP$(p,\mathcal C)$ has a nonempty, compact solution set;\\
(2) If $\mathcal C$ is strictly copositive if and only if the TCP$(p,\mathcal C)$ has a unique solution for every $p\geq0$.
\end{dl}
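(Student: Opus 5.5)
The plan is to treat this as a combination of known facts and reduce each part to strict copositivity via the identity $x^{\top}(p+\mathcal Cx^{m-1})=x^{\top}p+\mathcal Cx^{m}$, which holds for every solution $x$ of TCP$(p,\mathcal C)$. Symmetry of $\mathcal C$ is used only to invoke the semi-positivity/copositivity equivalence of Song et al.~\cite{Song2}, and (through Lemma \ref{yl:2.1}) to pass between copositivity and Pareto Z-eigenvalues.

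For part (1), the key steps are boundedness of the solution set, then existence, then closedness.

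First, boundedness. Suppose there are solutions $x^{k}$ of TCP$(p,\mathcal C)$ with $\|x^{k}\|\to\infty$, and set $u^{k}=x^{k}/\|x^{k}\|$.
\begin{itemize}
\item Dividing $p+\mathcal C(x^{k})^{m-1}\geq0$ by $\|x^{k}\|^{m-1}$ and passing to a convergent subsequence gives a limit $\bar u\geq0$ with $\|\bar u\|=1$ and $\mathcal C\bar u^{m-1}\geq0$.
\item Dividing the complementarity relation by $\|x^{k}\|^{m}$ gives $\mathcal C\bar u^{m}=0$.
\item This contradicts strict copositivity, so the solution set is bounded.
\end{itemize}

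Second, closedness is immediate from continuity of $x\mapsto p+\mathcal Cx^{m-1}$, so a nonempty bounded solution set is compact.

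Third, existence. I would use Karamardian's theorem for complementarity problems with positively homogeneous maps: if TCP$(0,\mathcal C)$ and TCP$(d,\mathcal C)$, for some $d>0$, both have only the zero solution, then TCP$(p,\mathcal C)$ is solvable for every $p$. Both hypotheses follow from strict copositivity. If $x\neq0$ solved either problem, then $0=x^{\top}d+\mathcal Cx^{m}>0$, a contradiction.

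The main obstacle in part (1) is checking that Karamardian's theorem applies in the tensor setting. The map $x\mapsto\mathcal Cx^{m-1}$ is positively homogeneous of degree $m-1$ rather than degree $1$. I would handle this with a degree or homotopy argument on the bounded set obtained in the first step. Alternatively, I would cite Che et al.~\cite{Che1}, who establish this existence result directly.

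For part (2), the forward direction is direct. Let $p\geq0$. Then $x=0$ is always a solution. If some $x\neq0$ were a solution, then $0=x^{\top}p+\mathcal Cx^{m}$. Here $x^{\top}p\geq0$ and $\mathcal Cx^{m}>0$, which is impossible, so the solution is unique.

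For the converse, assume $\mathcal C$ is not strictly copositive. By the equivalence of Song et al.~\cite{Song2} for symmetric tensors, $\mathcal C$ is then not strictly semi-positive. So there exists $x\geq0$, $x\neq0$, such that $(\mathcal Cx^{m-1})_{i}\leq0$ for every $i$ with $x_{i}>0$. Define $p\geq0$ componentwise by
\begin{itemize}
\item $p_{i}=-(\mathcal Cx^{m-1})_{i}$ when $x_{i}>0$;
\item $p_{i}=\max\{0,-(\mathcal Cx^{m-1})_{i}\}$ when $x_{i}=0$.
\end{itemize}
Then $p+\mathcal Cx^{m-1}\geq0$, and its components vanish on the support of $x$. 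Hence $x$ is a nonzero solution of TCP$(p,\mathcal C)$ alongside $0$, so the solution is not unique. This is the contrapositive of the converse, which completes part (2).
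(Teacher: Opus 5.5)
The paper gives no proof of this theorem; it quotes it from Song--Qi and Che et al. Your proposal is therefore a reconstruction rather than a variant of the paper's argument, and it is correct. The following steps all go through:
\begin{itemize}
\item the normalization argument for boundedness;
\item the check that TCP$(0,\mathcal C)$ and TCP$(d,\mathcal C)$ have only the zero solution;
\item the forward half of (2) via $x^{\top}p+\mathcal Cx^{m}$;
\item your explicit choice of $p$ in the converse.
\end{itemize}
What your version buys over the bare citation is that it shows exactly where symmetry enters: only in the converse of (2). That is necessary. For the nonsymmetric P-matrix $\bigl(\begin{smallmatrix}1&-3\\0&1\end{smallmatrix}\bigr)$, uniqueness for all $p\geq0$ holds although the matrix is not copositive.

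The ``main obstacle'' you flag in part (1) is not real. Karamardian's theorem is formulated for continuous maps that are positively homogeneous of an arbitrary degree $\alpha>0$. Applying it with $\alpha=m-1$ is the standard way strictly semi-positive tensors are shown to be Q-tensors. So your verification of its two hypotheses already completes existence, and no separate degree or homotopy argument is needed.

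If you want a self-contained route, set $\mu=\min\{\mathcal Cx^{m}:x\geq0,\ \|x\|=1\}>0$. Then $x^{\top}(p+\mathcal Cx^{m-1})\geq\mu\|x\|^{m}-\|p\|\,\|x\|$ on $\mathbb{R}^{n}_{+}$. This immediately bounds every solution by $\|x\|^{m-1}\leq\|p\|/\mu$. Now take a Hartman--Stampacchia solution of the variational inequality on $\{x\geq0:\|x\|\leq r\}$ with $\mu r^{m-1}>\|p\|$. Testing with $y=0$ shows it cannot have norm $r$, so it solves TCP$(p,\mathcal C)$. This coercivity estimate yields boundedness and existence at once.

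Similarly, you can do the converse of (2) inside the paper's own framework instead of passing through semi-positivity. If $\mathcal C$ is not strictly copositive, Lemma~\ref{yl:2.1} gives a Pareto Z-eigenpair $(\lambda,x)$ with $\lambda\leq0$. Then $p:=-\mathcal Cx^{m-1}=(\lambda x-\mathcal Cx^{m-1})+(-\lambda)x\geq0$ makes $x$ a nonzero solution with $p+\mathcal Cx^{m-1}=0$, alongside the zero solution.
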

According to Theorem \ref{dl:3.1} and \ref{dl:3.2}, we establish a sufficient condition for the existence and uniqueness of a global solution to the TCP.
\begin{tl}\label{tl:3.1}
Let $\mathcal C=(c_{i_{1}i_{2}\cdots i_{m}})\in\mathbb{R}^{[m,n]}$, if the condition in Theorem \ref{dl:3.1} is satisfied, then the TCP$(p, \mathcal C)$ has a nonempty,  compact solution set.
\end{tl}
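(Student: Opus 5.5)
The plan is to chain Theorem \ref{dl:3.1} with part (1) of Theorem \ref{dl:3.2}. The only gap is that Theorem \ref{dl:3.2} requires a symmetric, \emph{strictly} copositive tensor. So I will read the hypothesis of Corollary \ref{tl:3.1} as the strict form of condition \eqref{3.1}: $c_{i\cdots i}>0$ and the left-hand side of \eqref{3.1} is $>0$ for all $i\neq j$. I will also assume $\mathcal C$ is symmetric, as is implicit throughout Section 3. With only the non-strict version, Theorem \ref{dl:3.1} gives mere copositivity. That is not enough for Theorem \ref{dl:3.2}(1), so the strict reading is essential and I will state it explicitly.

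\textbf{Step 1.} Apply the strict case of Theorem \ref{dl:3.1}. Take any Pareto Z-eigenpair $(\lambda,x)$ and choose $p$ with $x_p=\max_i x_i>0$. For any $q\neq p$, inequality \eqref{3.3} (the lower bound on $\lambda$ derived in that proof) shows that $\lambda$ is at least the left-hand side of \eqref{3.1} with $i=p$, $j=q$. Under the strict hypothesis this quantity is positive, so every Pareto Z-eigenvalue of $\mathcal C$ is positive.

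\textbf{Step 2.} Lemma \ref{yl:2.1} converts the positivity of all Pareto Z-eigenvalues into strict copositivity of $\mathcal C$.

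\textbf{Step 3.} Since $\mathcal C$ is symmetric and strictly copositive, Theorem \ref{dl:3.2}(1) gives that TCP$(p,\mathcal C)$ has a nonempty, compact solution set for every $p\in\mathbb{R}^n$. This is the claim.

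The main obstacle is not computational. It is making sure the hypotheses line up: the strict inequality in \eqref{3.1}, and symmetry of $\mathcal C$ as needed for Theorem \ref{dl:3.2}. If one wants to avoid symmetry, I would instead cite the result of Che et al.\ \cite{Che1}, which gives nonempty compact solution sets for strictly copositive tensors without a symmetry assumption. I would mention this as the fallback route.
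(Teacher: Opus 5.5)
You follow the paper's own route: the paper gives no separate argument and asserts the corollary ``according to Theorem \ref{dl:3.1} and \ref{dl:3.2}''. You are also right that the strict form of \eqref{3.1} and symmetry must be assumed before Theorem \ref{dl:3.2}(1) can be applied. The chain breaks at Step 1. You import the bound
$\lambda\geq c_{p\cdots p}+(m-1)^{-\frac{m-1}{2}}\sum_{\delta_{pi_2\cdots i_m}=0}[c_{pi_2\cdots i_m}]_{+}-R_p(\mathcal C)_{-}$
from the end of the proof of Theorem \ref{dl:3.1}, but that bound is not correctly derived, for two reasons.

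\begin{itemize}
\item Inside a lower bound, that proof replaces $c_{p\cdots p}x_p^{m}$ by the larger quantity $c_{p\cdots p}x_p^{2}$. This is the wrong direction, since $c_{p\cdots p}>0$ and $0<x_p\leq 1$.
\item It replaces $\sum[c_{pi_2\cdots i_m}]_{+}x_{i_2}\cdots x_{i_m}$ by $(m-1)^{-\frac{m-1}{2}}\sum[c_{pi_2\cdots i_m}]_{+}$, as if Lemma \ref{yl:2.3} bounded these products from \emph{below}. It bounds them from above, and they can vanish.
\end{itemize}

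Positive off-diagonal entries cannot in general be credited with a fixed weight in a lower bound for $\lambda$, because a Pareto eigenvector can be supported where they contribute nothing. Note also that the two sums in \eqref{3.1} together run over all off-diagonal entries of row $i$. So \eqref{3.1} does not depend on $j$ and reads $c_{i\cdots i}+(m-1)^{-\frac{m-1}{2}}\bigl(R_i(\mathcal C)_{+}-c_{i\cdots i}\bigr)-R_i(\mathcal C)_{-}\geq(>)\,0$.

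In fact the bound, Theorem \ref{dl:3.1} and the corollary are all false. Let $\mathcal C\in\mathbb{R}^{[4,3]}$ be the symmetric tensor with $\mathcal Cx^{4}=x_1^4+x_2^4+x_3^4-4x_1^2x_2^2+12x_1^2x_3^2+12x_2^2x_3^2$. That is, $c_{1111}=c_{2222}=c_{3333}=1$, every index permutation of $(1,1,2,2)$ equals $-\frac{2}{3}$, every permutation of $(1,1,3,3)$ or $(2,2,3,3)$ equals $2$, and all other entries are zero.

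\begin{itemize}
\item \textbf{The hypothesis holds strictly.} For $i=1,2$ the left-hand side of \eqref{3.1} is $1+\frac{6}{3\sqrt{3}}-2=\frac{2}{\sqrt{3}}-1>0$. For $i=3$ it is $1+\frac{4}{\sqrt{3}}>0$. So the strict condition holds for all $i\neq j$.
\item \textbf{Step 1 fails.} We have $(\mathcal Cx^{3})_1=x_1^3-2x_1x_2^2+6x_1x_3^2$, $(\mathcal Cx^{3})_2=x_2^3-2x_1^2x_2+6x_2x_3^2$ and $(\mathcal Cx^{3})_3=x_3(x_3^2+6x_1^2+6x_2^2)$. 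Hence $x=\frac{1}{\sqrt{2}}(1,1,0)^{\top}$ and $\lambda=-\frac{1}{2}$ satisfy $\lambda x-\mathcal Cx^{3}=0$. This is a Pareto Z-eigenpair, both in the sense of Definition \ref{dy:1.1} and in the usual convention $\mathcal Cx^{m-1}-\lambda x\geq 0$. Since $\lambda<0$, Step 1 fails, and $\mathcal C$ is not copositive.
\item \textbf{The corollary's conclusion fails.} Take $p=(-1,-1,1)^{\top}$; then TCP$(p,\mathcal C)$ has no solution. For $x\geq 0$ we have $(p+\mathcal Cx^{3})_3\geq 1$, which forces $x_3=0$. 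Feasibility then requires $x_1(x_1^2-2x_2^2)\geq 1$ and $x_2(x_2^2-2x_1^2)\geq 1$. These give $x_1^2>2x_2^2>4x_1^2$ with $x_1>0$, which is impossible.
\end{itemize}

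For $m=2$, \eqref{3.1} just says the row sums are positive. The matrix $\begin{pmatrix}1&-2&2\\-2&1&2\\2&2&1\end{pmatrix}$ with the same $p$ fails in the same way: $x_3=0$ is forced, and then $w_1+w_2=-2-x_1-x_2<0$.

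So the gap cannot be closed, either by your argument or by the fallback through Che et al. What survives is the estimate obtained by simply dropping the positive terms: $\lambda x_p=(\mathcal Cx^{m-1})_p\geq\bigl(c_{p\cdots p}-R_p(\mathcal C)_{-}\bigr)x_p^{m-1}$. Under the stronger hypothesis $c_{i\cdots i}>R_i(\mathcal C)_{-}$ for all $i$, your three steps do go through.
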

\begin{tl}\label{tl:3.2}
Let $\mathcal C=(c_{i_{1}i_{2}\cdots i_{m}})\in\mathbb{R}^{[m,n]}$, if the condition in Theorem \ref{dl:3.1} is satisfied, then the TCP$(p, \mathcal C)$ has a unique solution for every $p\geq0$.
\end{tl}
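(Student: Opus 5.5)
The plan is to chain Theorem \ref{dl:3.1} with part (2) of Theorem \ref{dl:3.2}, in the same way Corollary \ref{tl:3.1} uses part (1).

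First, I will read the hypothesis of Corollary \ref{tl:3.2} as the strict version of condition (\ref{3.1}): $c_{i\cdots i}>0$ and the left-hand side of (\ref{3.1}) is $>0$ for all $i\neq j$. Uniqueness can only follow from strict copositivity, and the corollary is meant to mirror Corollary \ref{tl:3.1}, so this reading is the intended one. I will also assume, as Theorem \ref{dl:3.2} requires, that $\mathcal C$ is symmetric. This is the setting of the whole application section, where the optimization problem (\ref{1.1}) is posed for symmetric tensors.

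Second, with these hypotheses, Theorem \ref{dl:3.1} gives that every Pareto Z-eigenvalue $\lambda$ of $\mathcal C$ satisfies $\lambda>0$. By Lemma \ref{yl:2.1}, $\mathcal C$ is then strictly copositive.

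Third, I will apply the ``only if'' direction of Theorem \ref{dl:3.2}(2): strict copositivity of the symmetric tensor $\mathcal C$ implies that TCP$(p,\mathcal C)$ has a unique solution for every $p\geq0$. This is exactly the claim.

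The main obstacle is not the chaining but the validity of its inputs. Specifically, Theorem \ref{dl:3.1} must really deliver strict positivity of all Pareto Z-eigenvalues. If that step needs repair, I would first go back to $\lambda x_p^2=(\mathcal Cx^{m-1})_p x_p$, where $x_p$ is the largest component. I would bound the negative terms below by $-[c]_-x_p^2$, since every monomial is at most $x_p^{m-2}\cdot x_p\le x_p^2$ because $x_p\le1$. For the positive off-diagonal terms I would simply drop them, since they are $\geq0$. This gives $\lambda\ge c_{p\cdots p}x_p^{m-2}-R_p(\mathcal C)_-$ directly, which then has to be compared with (\ref{3.1}).

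I would treat everything else as a black box from the cited results \cite{Song2, Song3, Che2}.
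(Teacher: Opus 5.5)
Your chain (Theorem \ref{dl:3.1}, then Lemma \ref{yl:2.1}, then the ``only if'' half of Theorem \ref{dl:3.2}(2)) is exactly how the paper obtains Corollary \ref{tl:3.2}. Its last link is sound. If $\mathcal Cx^m>0$ on $\mathbb{R}^n_+\setminus\{0\}$ and some $x\neq0$ solved TCP$(p,\mathcal C)$ with $p\geq0$, then $x^\top\mathcal Cx^{m-1}>0$ gives a $k$ with $x_k>0$ and $(p+\mathcal Cx^{m-1})_k>0$, which contradicts complementarity.

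The genuine gap is the one you flagged, and it cannot be repaired. The strict form of (\ref{3.1}) does not imply strict copositivity, so Theorem \ref{dl:3.1} and Corollary \ref{tl:3.2} are false as stated. Take $m=4$, $n=3$ and the symmetric $\mathcal C$ with:
\begin{itemize}
\item $c_{1111}=c_{2222}=c_{3333}=1$;
\item all entries with index multiset $\{1,1,2,2\}$ equal to $-1$;
\item all entries with index multiset $\{1,1,3,3\}$ or $\{2,2,3,3\}$ equal to $4$;
\item all other entries $0$.
\end{itemize}
For any $j$, the two sums in (\ref{3.1}) (over $j\in\{i_2,\ldots,i_m\}$ and over $j\notin\{i_2,\ldots,i_m\}$) add up to the full off-diagonal positive part of row $i$. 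So the left side of (\ref{3.1}) is $1+\frac{12}{3\sqrt3}-3=\frac{4}{\sqrt3}-2>0$ for $i=1,2$, and $1+\frac{24}{3\sqrt3}>0$ for $i=3$, for every $j\neq i$. Yet
\[
\mathcal Cx^{4}=x_1^4+x_2^4+x_3^4-6x_1^2x_2^2+24x_1^2x_3^2+24x_2^2x_3^2
\]
equals $-4$ at $x=(1,1,0)^\top$, where $\mathcal Cx^{3}=(-2,-2,0)^\top$. Hence for $p=(2,2,0)^\top\geq0$ both $x=0$ and $x=(1,1,0)^\top$ solve TCP$(p,\mathcal C)$. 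Equivalently, $\lambda=-1$ is a Pareto Z-eigenvalue with eigenvector $(1,1,0)^\top/\sqrt2$.

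The faulty step in the proof of Theorem \ref{dl:3.1} is the lower bound on the positive off-diagonal terms $[c_{pi_2\cdots i_m}]_+x_px_{i_2}\cdots x_{i_m}$ by $(m-1)^{-(m-1)/2}[c_{pi_2\cdots i_m}]_+$ times a power of $x_p$. Lemma \ref{yl:2.3} only bounds such products from above, and they vanish as soon as some $x_{i_k}=0$.

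Your fallback estimate avoids this and is correct. In fact, since every factor satisfies $x_{i_k}\leq x_p$, each monomial is at most $x_p^m$, which gives $\lambda\geq(c_{p\cdots p}-R_p(\mathcal C)_-)x_p^{m-2}$; in the example this bound equals $-1$. But this estimate can only yield the corollary under a hypothesis such as $c_{i\cdots i}>R_i(\mathcal C)_-$ for all $i$, where positive off-diagonal entries receive no credit. Enlarging the entries $4$ in the example shows that no positive weight on those entries is admissible, in particular not $(m-1)^{-(m-1)/2}$. So the comparison with (\ref{3.1}) that you planned cannot succeed.
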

The tensor generalized eigenvalue complementarity problem (TGEiCP), is to find $x\in\mathbb{R}^{n}$ such that
\begin{equation*}
\begin{cases}
x \geq 0, \\
\lambda \mathcal{C}_2 x^{m-1} - \mathcal{C}_1 x^{m-1} \geq 0, \\
x^\top \bigl( \mathcal{C}_2 x^{m-1} - \mathcal{C}_1 x^{m-1} \bigr) = 0.
\end{cases}
\end{equation*}
Suppose that $\lambda^{\max}_{\mathcal C_{1},\mathcal C_{2}}=\max\{\lambda|x\in\mathbb{R}_{+}^{n}, (\lambda,x)\ \mbox{is a solution of TGEiP}\},$
\begin{equation}\label{3.4}
\rho(\mathcal C_{1}, \mathcal C_{2})=\max_{x}\{\lambda(x),\mathcal C_{2}x^{m}\neq0\},
\end{equation}
where $\lambda(x)=\frac{\mathcal C_{1}x^{m}}{\mathcal C_{2}x^{m}}$. 

Ling et al.\cite{Ling1} discussed the solution existence of the TGEiCP as follows.
\begin{dl}\label{dl:3.3}(\cite{Ling1})
Let $\mathcal C_{1}, \mathcal C_{2}\in \mathbb{R}^{[m,n]}$. The following results hold.\\
(1) If $\mathcal C_{2}$ is strictly copositive, then TGEiCP has at least one solution;\\
(2) If $\mathcal C_{1}$ and $\mathcal C_{2}$ are symmetric and $\mathcal C_{2}$ is strictly copositive, and if $\bar{x}$ is a stationary point of (\ref{3.4}), then $(\lambda(\bar{x}), \bar{x})$ is a solution of TGEiCP.
\end{dl}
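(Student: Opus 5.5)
The plan is to reduce both parts to first-order optimality or variational-inequality conditions on the standard simplex $\Delta:=\{x\in\mathbb{R}^n_+:\sum_i x_i=1\}$. Here I read the third condition of TGEiCP as $x^\top(\lambda\mathcal C_2x^{m-1}-\mathcal C_1x^{m-1})=0$; the $\lambda$ is evidently missing in the display. The starting point is that strict copositivity of $\mathcal C_2$ gives $\mathcal C_2x^m>0$ for every $x\in\Delta$. Since $\Delta$ is compact and $x\mapsto\mathcal C_2x^m$ is continuous, $\mathcal C_2x^m\ge c>0$ on $\Delta$. Hence $\lambda(x)=\mathcal C_1x^m/\mathcal C_2x^m$ is well defined and continuous on $\Delta$. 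Because it is positively homogeneous of degree $0$, restricting to $\Delta$ loses nothing.

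For (1), where symmetry is not assumed, I would avoid gradients, since they would involve symmetrizations of $\mathcal C_1,\mathcal C_2$. Instead, define the continuous map $F(x)=\lambda(x)\mathcal C_2x^{m-1}-\mathcal C_1x^{m-1}$ on $\Delta$. By the Hartman--Stampacchia theorem, there is $\bar x\in\Delta$ with $\langle F(\bar x),y-\bar x\rangle\ge0$ for all $y\in\Delta$. The key observation is that $\langle F(\bar x),\bar x\rangle=\lambda(\bar x)\mathcal C_2\bar x^m-\mathcal C_1\bar x^m=0$ by the very definition of $\lambda(\bar x)$. Therefore $\langle F(\bar x),y\rangle\ge0$ for all $y\in\Delta$. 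Taking $y=e_i$ gives $F(\bar x)\ge0$. Together with $\bar x\ge0$ and $\bar x^\top F(\bar x)=0$, this shows that $(\lambda(\bar x),\bar x)$ solves TGEiCP.

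For (2), with $\mathcal C_1,\mathcal C_2$ symmetric, we have $\nabla(\mathcal C_kx^m)=m\,\mathcal C_kx^{m-1}$. The quotient rule then gives
$$\nabla\lambda(x)=\frac{m}{\mathcal C_2x^m}\bigl(\mathcal C_1x^{m-1}-\lambda(x)\mathcal C_2x^{m-1}\bigr).$$
I interpret ``stationary point of \eqref{3.4}'' as a KKT point of maximizing $\lambda$ over $\Delta$. That is, there exist $\mu\in\mathbb{R}$ and $w\ge0$ with $\nabla\lambda(\bar x)=\mu\mathbf 1-w$ and $w^\top\bar x=0$. Taking the inner product with $\bar x$ and using Euler's identity, $\bar x^\top\nabla\lambda(\bar x)=0$ because $\lambda$ is $0$-homogeneous. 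This yields $\mu=\mu\,\mathbf 1^\top\bar x=0$. Hence $\nabla\lambda(\bar x)=-w\le0$ with $\bar x^\top\nabla\lambda(\bar x)=0$. Multiplying by the positive factor $\mathcal C_2\bar x^m/m$ gives exactly $\lambda(\bar x)\mathcal C_2\bar x^{m-1}-\mathcal C_1\bar x^{m-1}\ge0$ together with complementarity.

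I expect the main obstacle to be fixing the precise meaning of ``stationary point'' in (2). One could instead work over $\mathbb{R}^n_+\setminus\{0\}$ or over the unit sphere intersected with $\mathbb{R}^n_+$. In each case the normalization multiplier must be shown to vanish, and this is handled by the Euler-identity argument above. A secondary point is that the maximum in \eqref{3.4} is attained, which follows from compactness of $\Delta$ and the uniform lower bound $c$.
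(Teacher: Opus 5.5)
The paper gives no proof of this statement. It is quoted from Ling et al.\ and used only as a black box for Corollary 3.3, so there is no in-paper argument to compare against.

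Your proof is correct and complete, and it follows the standard argument for this result:
in (1), you apply Hartman--Stampacchia on the simplex to $F(x)=\lambda(x)\mathcal C_2x^{m-1}-\mathcal C_1x^{m-1}$, and the identity $\langle F(\bar x),\bar x\rangle=0$ turns the variational inequality into $F(\bar x)\ge 0$;
in (2), you use the KKT system for maximizing the generalized Rayleigh quotient over the simplex, where $\bar x^\top\nabla\lambda(\bar x)=0$ forces the multiplier of $\mathbf 1^\top x=1$ to vanish.

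You were also right to repair the paper's transcription in two places. First, the $\lambda$ is missing from the complementarity condition of TGEiCP. Second, the feasible set in (3.4) is unspecified, and it must include $x\ge 0$ for part (2) to hold, since an unconstrained critical point of $\lambda$ need not be nonnegative.
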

Based on Theorem \ref{dl:3.1} and \ref{dl:3.3}, a sufficient condition for the existence of a solution to the TGEiCP can be obtained directly.
\begin{tl}\label{tl:3.3}
Let $\mathcal C_{1}, \mathcal C_{2} \in \mathbb{R}^{[m,n]}$. The following results hold:\\
(1) If $\mathcal C_{2}$ satisfies the condition in Theorem \ref{dl:3.1}, then the TGEiCP has at least one solution;\\
(2) If $\mathcal C_{1}$ and $\mathcal C_{2}$ are symmetric, and $\mathcal C_{2}$ satisfies the condition of Theorem \ref{dl:3.1}, and if $\bar{x}$ is a stationary point of (\ref{3.4}), then $(\lambda(\bar{x}), \bar{x})$ is a solution of TGEiCP.
\end{tl}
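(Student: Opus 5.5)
The plan is to obtain Corollary \ref{tl:3.3} by chaining Theorem \ref{dl:3.1} into Theorem \ref{dl:3.3}. Theorem \ref{dl:3.3} needs only one property of $\mathcal C_{2}$, namely strict copositivity, and Theorem \ref{dl:3.1} supplies exactly this property from the entrywise condition (\ref{3.1}). So the whole argument reduces to checking that the hypotheses of Theorem \ref{dl:3.1}, in its strict form, hold for $\mathcal C_{2}$, and then quoting Theorem \ref{dl:3.3}.

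\textbf{Step 1.} Apply Theorem \ref{dl:3.1} to $\mathcal C_{2}$. By hypothesis, $\mathcal C_{2}$ has positive diagonal entries $(\mathcal C_{2})_{i\cdots i}>0$. The strict version of (\ref{3.1}) holds for all $i,j\in[n]$ with $i\neq j$. Theorem \ref{dl:3.1} then shows that every Pareto Z-eigenvalue of $\mathcal C_{2}$ is positive, and Lemma \ref{yl:2.1} converts this into strict copositivity of $\mathcal C_{2}$.

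I would read ``satisfies the condition in Theorem \ref{dl:3.1}'' as the strict inequality in (\ref{3.1}). The non-strict version gives only copositivity, which is not enough for Theorem \ref{dl:3.3}. I will state this reading explicitly.

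\textbf{Step 2.} For part (1), invoke Theorem \ref{dl:3.3}(1) with the strictly copositive $\mathcal C_{2}$ obtained in Step 1. This gives at least one solution of the TGEiCP.

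For part (2), the symmetry of $\mathcal C_{1}$ and $\mathcal C_{2}$ is assumed directly. Strict copositivity of $\mathcal C_{2}$ again comes from Step 1. Theorem \ref{dl:3.3}(2) then shows that $(\lambda(\bar x),\bar x)$ solves the TGEiCP for any stationary point $\bar x$ of (\ref{3.4}).

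One point is worth recording in part (2). Strict copositivity gives $\mathcal C_{2}x^{m}>0$ for every $x\in\mathbb{R}^{n}_{+}\setminus\{0\}$. Hence $\lambda(x)=\mathcal C_{1}x^{m}/\mathcal C_{2}x^{m}$ is well defined on the nonnegative orthant, so the stationary point in (\ref{3.4}) makes sense there.

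\textbf{Main obstacle.} The only delicate point is whether Theorem \ref{dl:3.1} itself is reliable, since the corollary inherits everything from it. Its proof uses a largest component $x_{p}$ and Lemma \ref{yl:2.3} to bound the mixed products. If that step is in doubt, I would fall back on the weaker but safe route: the bound in (\ref{3.3}) directly gives $\lambda>0$ for every Pareto Z-eigenvalue of $\mathcal C_{2}$ under strict (\ref{3.1}). Lemma \ref{yl:2.1} then yields strict copositivity. Beyond this, the corollary is a direct composition of the two theorems and needs no new estimates.
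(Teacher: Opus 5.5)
Your composition of Theorem \ref{dl:3.1} with Theorem \ref{dl:3.3} is exactly what the paper does; its proof is the single remark that the corollary follows directly from these two theorems. You are also right that the strict form of (\ref{3.1}) is what must be assumed. The gap is the one you flagged yourself, it is shared by the paper's one-line proof, and your fallback does not close it. The bound you cite as (3.3) is the final lower bound on $\lambda$ in the proof of Theorem \ref{dl:3.1}. It is therefore the output of the very step in doubt, so appealing to it repeats the same argument rather than giving a weaker safe route.

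That step is invalid for two reasons. First, it uses the \emph{upper} bound $x_{i_2}\cdots x_{i_m}\le (m-1)^{-\frac{m-1}{2}}$ from Lemma \ref{yl:2.3} as a lower bound for the positive off-diagonal terms. For $m=2$ this amounts to pretending $x_k\ge 1$. Second, for $m\ge 3$ it replaces $c_{p\cdots p}x_p^{m}$ by the larger quantity $c_{p\cdots p}x_p^{2}$ inside a lower bound. More fundamentally, positive entries cannot offset negative ones here, because they contribute nothing when the coordinates they involve vanish.

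In fact Theorem \ref{dl:3.1} is false, so your Step 1 has no valid justification. For $m=2$ (allowed, since $m,n\ge 2$), condition (\ref{3.1}) says exactly that $c_{ii}>0$ and $\sum_{k}c_{ik}>0$ for every $i$. The symmetric matrix
\[
B=\begin{pmatrix}1&-2&10\\-2&1&10\\10&10&1\end{pmatrix}
\]
has row sums $9,9,21$, yet $x=(1,1,0)^\top$ gives $x^\top Bx=-2$. Placing large positive entries in index tuples that contain a coordinate vanishing at the bad vector produces the same failure for every $m$.

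The corollary itself also fails. Take $\mathcal C_2=B$ and $\mathcal C_1=A=\begin{pmatrix}1&0&1\\0&1&0\\1&0&0\end{pmatrix}$; checking the seven possible supports of $x$ shows that the TGEiCP has no solution:
\begin{itemize}
\item With full support, the third row of $(\lambda B-A)x=0$ forces $0<\lambda<\frac{1}{10}$, and then all coefficients of the first row are negative.
\item The supports $\{1,3\}$ and $\{2,3\}$ admit no positive null vector.
\item Each remaining support forces a negative entry of $\lambda Bx-Ax$ off the support.
\end{itemize}

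So the statement needs a hypothesis that genuinely forces strict copositivity of $\mathcal C_2$. For symmetric $\mathcal C_2$, the condition $(\mathcal C_2)_{i\cdots i}>R_i(\mathcal C_2)_-$ for all $i$ works; this is (\ref{3.1}) without the positive-part terms. It yields strict copositivity via the largest-coordinate argument at a minimizer of $\mathcal C_2x^m$ on $\{x\ge 0,\ x^\top x=1\}$. With that hypothesis, your composition with Theorem \ref{dl:3.3} goes through unchanged.
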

\subsection{Checking vacuum stability of $\mathbb{Z}_3$ Scalar Dark Matter }
The vacuum stability of scalar potentials involving multiple fields was systematically investigated by Kannike in \cite{K1}, where explicit necessary and sufficient conditions were derived for ensuring stability in intricate multi-dimensional field spaces.
We consider the most general scalar quartic potential of the standard model Higgs $H_{1}$, an inert doublet $H_{2}$ and a complex singlet $S$ which is symmetric under a $\mathbb{Z}_3$ discrete group\cite{K1}:
\begin{equation}\label{3.5}
\begin{aligned}
V(H_1, H_2, S) &= \lambda_{1} \|H_{1}\|^{4} + \lambda_{2} \|H_{2}\|^{4} + \lambda_{3} \|H_{1}\|^{2} \|H_{2}\|^{2} \\
&\quad + \lambda_{4} (H_{1}^\dagger H_{2})(H_{2}^\dagger H_{1}) + \lambda_{S} \|S\|^{4} + \lambda_{S1} \|S\|^{2} \|H_{1}\|^{2} \\
&\quad + \lambda_{S2} \|S\|^{2} \|H_{2}\|^{2} + \frac{1}{2} (\lambda_{S12} S^{2} H_{1}^\dagger H_{2} + \lambda_{S12}^* S^{\dagger} H_{2}^\dagger H_{1}) \\
=& \lambda_1 h_1^4+\lambda_2 h_2^4+\lambda_3 h_1^2 h_2^2+\lambda_4 \rho^2 h_1^2h_2^2+\lambda_Ss^4+\lambda_{S1}s^2h_1^2\\&+\lambda_{S2}s^2h_2^2+|\lambda_{S12}|\rho s^2h_1h_2\cos (\theta +2\theta_S+\theta_\lambda),
\end{aligned}
\end{equation}
where \begin{center}
$h_{1}=\|H_{1}\|$, $h_{2}=\|H_{2}\|$, $H_1^{\dagger}H_2=h_{1}h_{2}\rho \textbf{e}^{\textbf{i}\theta}$, $S=s\textbf{e}^{\textbf{i}\theta_{S}}$, $\lambda_{S12}=|\lambda_{S12}|\textbf{e}^{\textbf{i}\theta_{\lambda}}$. 
\end{center} This definites  a 4th order 3-dimensional real symmetric tensor $\mathcal{T}_\rho^{\theta'}=(t_{ijkl})$  with two parameters $\rho$ and $\theta'$  (here $\theta'=\theta +2\theta_S+\theta_\lambda$). Obviously, it follows that $V(H_{1},H_{2},S)\geq0$ for all $H_{1}, H_{2}, S$ is equivalent to  $V(h_{1},h_{2},s)\geq0$ for all $\rho\in[0,1], h_{1}, h_{2}, s$  (take the minimizer $\cos \theta'=-1$),
\begin{equation}\label{eq:36} \aligned	
V(h_{1},h_{2},s)&=\lambda_{1}h_{1}^{4}+\lambda_{2}h_{2}^{4}+\lambda_{3}h_{1}^{2}h_{2}^{2}+\lambda_{4}\rho^{2}h_{1}^{2}h_{2}^{2}\\
&\quad+\lambda_{S}s^{4}+\lambda_{S1}s^{2}h_1^2+\lambda_{S2}s^{2}h_{2}^{2}-|\lambda_{S12}|\rho s^{2}h_{1}h_{2}.
\endaligned\end{equation} Consequently, the coupling tensor $\mathcal{V} = (v_{p_{1}p_{2}p_{3}p_{4}})$ of coefficients of \eqref{eq:36} is a real symmetric tensor of order $4$ and dimension $3$, with
\begin{equation*}
\begin{split}
&v_{1111} = \lambda_1, \quad v_{2222} = \lambda_2, \quad v_{3333} = \lambda_S, v_{1122} = \frac{1}{6}(\lambda_3 + \lambda_4 \rho^2),\\
&v_{1133} = \frac{1}{6}\lambda_{S1}, \quad v_{2233} = \frac{1}{6}\lambda_{S2}, v_{1233} = -\frac{1}{12}\lvert\lambda_{S12}\rvert,
\end{split}
\end{equation*}
and others $v_{p_{1}p_{2}p_{3}p_{4}}=0$. Note that the notations below are all same as given in \cite{Chen1}.

When $\rho \neq 0$, Kannike \cite{K1} has determined the conditions under which the potential (\ref{3.5}), which is symmetric with respect to a $\mathbb{Z}_{3}$ transformation, is guaranteed to be bounded from below.

\begin{equation}\label{3.6}
\begin{cases}
\lambda_{S}> 0, V(h_{1}, h_{2}):=V(h_{1}, h_{2}, 0)> 0, \\
0 < h_{1}^{2} < 1,\; 0 < h_{2}^{2} < 1,\; 0 < s^{2} < 1, \text{ and } 0 < \rho^{2} < 1 \Rightarrow V_{\min} = 0,
\end{cases}
\end{equation}
where
\begin{equation*}
\begin{aligned}
\rho&=\frac{|\lambda_{S12}| s^{2}}{2 \lambda_4 h_{1} h_{2}},\\
h_{1}^{2}&=\frac{(2\lambda_{2} - \lambda_{3})(4\lambda_{S} \lambda_{4} - |\lambda_{S12}|^{2}) + 2\lambda_{4}\bigl((\lambda_{3} + \lambda_{S1})\lambda_{S2} - 2\lambda_{2} \lambda_{S1} - \lambda_{S2}^2\bigr)}{2t},\\
h_{2}^{2}&=\frac{(2\lambda_{1} - \lambda_{3})(4\lambda_{S}\lambda_{4} - |\lambda_{S12}|^{2}) + 2\lambda_{4}\bigl((\lambda_{3} + \lambda_{S2})\lambda_{S1} - 2\lambda_1\lambda_{S2} - \lambda_{S1}^2\bigr)}{2t},\\
s^{2}&=\frac{\lambda_{4}\bigl(4\lambda_{1}\lambda_{2} - \lambda_{3}^{2} - 2\lambda_{1}\lambda_{S2} - 2\lambda_{2}\lambda_{S1} + \lambda_{3}(\lambda_{S1} + \lambda_{S2})\bigr)}{t},\\
V_{\min}&=\frac{(4\lambda_{1}\lambda_{2} - \lambda_{3}^{2})(4\lambda_{S}\lambda_{4} - |\lambda_{S12}|^{2}) - 4\lambda_{4}\bigl(\lambda_1\lambda_{S2}^{2} + \lambda_{2}\lambda_{S1}^{2} - \lambda_{3}\lambda_{S1}\lambda_{S2}\bigr)}{4t},
\end{aligned}
\end{equation*}
where
\begin{equation*}
t := (\lambda_1 + \lambda_2 - \lambda_3)(4\lambda_S\lambda_4 - |\lambda_{S12}|^2) 
      + \lambda_4\bigl(4\lambda_1\lambda_2 - \lambda_3^2 - 4(\lambda_1\lambda_{S2} + \lambda_2\lambda_{S1}) 
      + 2\lambda_3(\lambda_{S1} + \lambda_{S2}) - (\lambda_{S1} - \lambda_{S2})^2\bigr).
\end{equation*}

The third formula in (\ref{3.6}) is superseded by  $V_{\rho=0} > 0$ when $\rho = 0,$ and by $V_{\rho=1} > 0$  when $\rho = 1.$ 
In \cite{Chen1}, two algorithms were presented to verify the copositivity of tensors, which is essential for verifying the stability of the vacuum state in a generic scalar potential.

Subsequently, an example drawn from \( \mathbb{Z}_{3} \) scalar dark matter is provided to illustrate the application of our findings. Consider the case that
\begin{equation*}
\begin{cases}
    \lambda_{1}= \lambda_{2} = \lambda_{3} = \lambda_{4} = \lambda_{S} = \lambda_{S1} = \lambda_{S2}=\lambda_{S12}=\pi,\\
    h_{1}^{2} = h_{2}^{2} = 0.25,\quad {s}^{2} = 0.5,\quad \rho = 1,\quad V_{\min} = 1.96,
\end{cases}
\end{equation*}
also see the first case in Table 4 of \cite{Chen1}. According to Theorem \ref{3.1}, it can be categorized as follows:\\
If $i=1, j=2,$ we obtain
\begin{equation*}\quad
\pi(\frac{3}{4}+\frac{1}{2\sqrt{3}})>0.
\end{equation*} 
If $i=1, j=3,$ we obtain
\begin{equation*}\quad
\pi(\frac{3}{4}+\frac{1}{2\sqrt{3}})>0.
\end{equation*}
If $i=2, j=1,$ we obtain
\begin{equation*}\quad
\pi(\frac{3}{4}+\frac{1}{2\sqrt{3}})>0.
\end{equation*}
If $i=2, j=3,$ we obtain
\begin{equation*}\quad
\pi(\frac{3}{4}+\frac{1}{2\sqrt{3}})>0.
\end{equation*}
If $i=3, j=1,$ we obtain
\begin{equation*}\quad
\pi(\frac{1}{2}+\frac{1}{3\sqrt{3}})>0.
\end{equation*}
If $i=3, j=2,$ we obtain
\begin{equation*}\quad
\pi(\frac{1}{2}+\frac{1}{3\sqrt{3}})>0.
\end{equation*}

Consequently, $\mathcal V$ constitutes a strictly copositive tensor. This characterization not only renders our preceding results pertinent to such physical systems but also underscores the utility of this framework as a diagnostic tool for copositivity, thereby facilitating the assessment of vacuum stability within the context of a general scalar potential.
\section{Conclusion}
\indent
This paper proposes some Pareto Z-eigenvalue localization sets for the tensor eigenvalue complementarity problem and gives theoretical comparisons with existing results. The theoretical results show that the new Pareto Z-eigenvalue localization sets are sharper than those in Theorem \ref{dl:1.1}. As applications, we present a sufficient condition ensuring the existence of solutions to TCP and TGEiCP. This condition provides an approach to check vacuum stability for a general scalar potential. Finally, numerical experiments are performed to show the effectiveness of the main results.
\section*{Statements and Declarations}
The authors of this work don't have financial or non-financial interests.

\end{document}